\theoremstyle{definition} 
\newtheorem{thm}{Theorem}[section]
\newtheorem{cor}[thm]{Corollary}
\newtheorem{prop}[thm]{Proposition}
\newtheorem{lem}[thm]{Lemma}
\theoremstyle{definition}
\newtheorem{exmp}[thm]{Example}
\newtheorem{rmk}[thm]{Remark}
\theoremstyle{definition}
\theoremstyle{remark}
\newcommand{\Conf}{\mathrm{Conf}}
\newcommand{\bZ}{\mathbb{Z}}
\newcommand{\bR}{\mathbb{R}}
\newcommand{\bP}{\mathbb{P}}
\newcommand{\bA}{\mathbb{A}}
\newcommand{\bC}{\mathbb{C}}
\newcommand{\bF}{\mathbb{F}}
\newcommand{\mb}{\mathbf}
\newcommand{\mr}{\mathrm}
\newcommand{\sm}{\smallsetminus}
\newcommand{\sub}{\subset}
\newcommand{\Spec}{\mathrm{Spec}}
\newcommand{\Sym}{\mathrm{Sym}}
\newcommand{\Prob}{\mathrm{Prob}}
\newcommand{\om}{\omega}
\newcommand{\Hom}{\mathrm{Hom}}
\newcommand{\id}{\mathrm{id}}
\newcommand{\bs}{\boldsymbol}
\newcommand{\es}{\emptyset}
\newcommand{\bl}{\bullet}
\newcommand{\ra}{\rightarrow}
\newcommand{\hra}{\hookrightarrow}
\newcommand{\ot}{\otimes}
\newcommand{\bop}{\bigoplus}
\newcommand{\be}{\begin{enumerate}}
\newcommand{\ee}{\end{enumerate}}
\newcommand{\Ep}{E^{\times}}
\newcommand{\Gr}{\mathrm{Gr}}
\newcommand{\Dl}{\Delta}
\newcommand{\wg}{\wedge}
\newcommand{\sg}{\sigma}
\numberwithin{equation}{section}
\begin{document}

\title[]{Betti and Hodge numbers of configuration spaces of a punctured elliptic curve from its zeta functions}
%\date{\today}
%    Information for first author
\author{Gilyoung Cheong and Yifeng Huang}
%    Address of record for the research reported here
\address{
Cheong: Department of Mathematics, University of California--Irvine, 340 Rowland Hall, Irvine, CA 92697-3875 \\
Huang: Department of Mathematics, University of Michigan, 530 Church Street, Ann Arbor, MI 48109-1043}
\email{gilyounc@uci.edu, huangyf@umich.edu}

\begin{abstract}
Given an elliptic curve $E$ defined over $\bC$, let $\Ep$ be an open subset of $E$ obtained by removing a point. In this paper, we show that the $i$-th Betti number of the unordered configuration space $\Conf^{n}(\Ep)$ of $n$ points on $\Ep$ appears as a coefficient of an explicit rational function in two variables. We also compute its Hodge numbers as coefficients of another explicit rational function in four variables. Our result is interesting because these rational functions resemble the generating function of the $\bF_{q}$-point counts of $\Conf^{n}(\Ep)$, which can be obtained from the zeta function of $E$ over a finite field $\bF_{q}$. We show that the mixed Hodge structure of the $i$-th singular cohomology group $H^{i}(\Conf^{n}(\Ep))$ with complex coefficients is pure of weight $w(i)$, an explicit integer we provide in this paper. This purity statement implies our main result about the Betti numbers and the Hodge numbers. Our proof uses Totaro's spectral sequence computation that describes the weight filtration of the mixed Hodge structure on $H^{i}(\Conf^{n}(\Ep))$.
\end{abstract}

\maketitle

\section{Introduction}

\subsection{Motivation and main results} In number theory, it is classically known that the probability that a random positive integer is square-free is equal to $1/\zeta_{\Spec(\bZ)}(2)$, where $\zeta_{\Spec(\bZ)}(s)$ is the Riemann zeta function. More specifically, we have

$$\lim_{n \ra \infty}\underset{\substack{m \in \bZ: \\ 1 \leq m \leq n}}{\Prob}(m \text{ is square-free}) = \frac{1}{\zeta_{\Spec(\bZ)}(2)}.$$

\

The analogue for $\bF_{q}[x]$ in place of $\bZ$, where $\bF_{q}$ is a finite field, is even better: we have

$$\underset{\substack{f \in \bF_{q}[x]: \\\deg(f) = n}}{\Prob}(f \text{ is square-free}) = 1 - q^{-1} = \frac{1}{\zeta_{\bA^{1}_{\bF_{q}}}(2)},$$

\

for any fixed degree $n \geq 2$, where $\zeta_{\bA^{1}_{\bF_{q}}}(s)$ is the zeta function of the affine line $\bA^{1}_{\bF_{q}} = \Spec(\bF_{q}[x])$ over $\bF_{q}$.  In \cite{CEF}, Church, Ellenberg, and Farb explains how counting such polynomials is related to the topology of the unordered configuration space $\Conf^{n}(\bC)$ of the complex plane $\bC = \bR^{2}$, or equivalently, the set of $\bC$-points on the affine line over $\bC$. After realizing the set of monic square-free polynomials in $\bF_{q}[x]$ as the set $\Conf^{n}(\bA^{1})(\bF_{q})$ of the $\bF_{q}$-points on the unordered configuration space of the affine line, they show that

$$|\Conf^{n}(\bA^{1})(\bF_{q})| = \sum_{i=0}^{\infty}(-1)^{i}q^{n-i}h^{i}(\Conf^{n}(\bC)),$$

\

where $h^{i}$ means the $i$-th Betti number, using the $l$-adic cohomology theory along with the result about how the geometric Frobenius acts on the $i$-th $l$-adic cohomology group of an arbitrary affine hyperplane complement over $\bF_{q}$, independently known by Lehrer \cite{Leh} and Kim \cite{Kim}. Note that

\begin{align*}
\sum_{n=0}^{\infty}|\Conf^{n}(\bA^{1})(\bF_{q})|t^{n} &= \sum_{\substack{f \in \bF_{q}[x]: \\f \text{square-free}}} t^{\deg(f)} = \prod_{P \in |\bA^{1}_{\bF_{q}}|}(1 + t^{\deg(P)}) = \prod_{P \in |\bA^{1}_{\bF_{q}}|}\frac{1 -  t^{2\deg(P)}}{1 - t^{\deg(P)}} = \frac{Z_{\bA^{1}_{\bF_{q}}}(t)}{Z_{\bA^{1}_{\bF_{q}}}(t^{2})} = \frac{1 - qt^{2}}{1 - qt}
\end{align*}

\

where $|\bA^{1}_{\bF_{q}}|$ is the set of monic irreducible polynomials in $\bF_{q}[x]$ and $Z_{\bA^{1}_{\bF_{q}}}(t)$ is the zeta series of $\bA^{1}_{\bF_{q}}$, meaning $Z_{\bA^{1}_{\bF_{q}}}(q^{-s}) = \zeta_{\bA^{1}_{\bF_{q}}}(s)$. Hence, we have

$$\sum_{n=0}^{\infty}\sum_{i=0}^{\infty}(-1)^{i}q^{n-i}h^{i}(\Conf^{n}(\bC))t^{n} = \frac{1 - qt^{2}}{1 - qt} = 1 + qt + (q^{2} - q)t^{2} + (q^{3} - q^{2})t^{3} + \cdots.$$

\

Since the above identity holds for all prime powers $q$, we have

$$h^{i}(\Conf^{n}(\bC)) = \left\{
	\begin{array}{ll}
	1 & \mbox{if } n = 0, 1 \text{ and } i = 0,  \\
	1 & \mbox{if } n \geq 2 \text { and } i = 0, 1, \\
	0 & \mbox{otherwise}, \\
	\end{array}\right.$$

\

which recovers a result of Arnol'd in \cite{Arn}.

\

\hspace{3mm} In this paper, we seek a genus $1$ analogue of this story by replacing $\bA^{1}$, or $\bP^{1}$ minus a (degree 1) point, by an elliptic curve $E$ minus a point, which can be defined by any equation of the form

$$y^{2} = f(x),$$

\

where $f(x)$ is a square-free polynomial of degree $3$ over the ambient field. We denote this punctured elliptic curve as $E^{\times}$. Over $\bF_{q}$, a theorem of Weil gives an explicit form of the zeta series

$$Z_{E^{\times}}(t) = \frac{(1 - \alpha t)(1 - \bar{\alpha} t)}{1 - qt},$$

\

where $\alpha$ is an algebraic integer with the complex norm $q^{1/2}$ and $\bar{\alpha}$ its complex conjugation. This implies that

\begin{align*}
\sum_{n=0}^{\infty}|\Conf^{n}(E^{\times})(\bF_{q})|t^{n} = \frac{Z_{\Ep}(t)}{Z_{\Ep}(t^{2})} = \frac{(1 - \alpha t)(1 - \bar{\alpha} t)(1 - qt^{2})}{(1 - \alpha t^{2})(1 - \bar{\alpha} t^{2})(1 - qt)},
\end{align*}

\

using a similar computation for the $\bA^{1}$ case. Unfortunately, the proof presented in \cite{CEF} does not generalize to connect this arithmetic result to topology. As our main result, we obtain a topological analogue of the above arithmetic computation with a different proof:

\begin{thm}\label{main1} Let $\Ep$ be an open subset of an elliptic curve over $\bC$ obtained by removing a point, with respect to the analytic topology. We have

$$\sum_{n=0}^{\infty}\sum_{i=0}^{\infty}(-1)^{i}h^{i}(\Conf^{n}(E^{\times})) u^{2n - w(i)}t^{n} = \frac{(1 - ut)^{2}(1 - u^{2}t^{2})}{(1 - ut^{2})^{2}(1 - u^{2}t)}$$

\

and 

\

$$\sum_{n=0}^{\infty} \sum_{i=0}^{\infty} (-1)^i \sum_{p,q\geq 0} h^{n-p,n-q}(H^{i}(\Conf^{n}(\Ep))) x^p y^q u^{2n - w(i)} t^n = \frac{(1-xut)(1-yut)(1-xyu^{2}t^2)}{(1-xut^2)(1-yut^2)(1-xyu^{2}t)},$$

\

where 

$$w(i) := \left\{
	\begin{array}{ll}
	3i/2 & \mbox{if } i \mbox{ is even and}  \\
	(3i-1)/2 & \mbox{if } i \mbox{ is odd.}
	\end{array}\right.$$
	
\

and $h^{p,q}(H^{i}(\Conf^{n}(\Ep)))$ denote the Hodge numbers of the $i$-th cohomology group\footnote{From now on, every cohomology of a complex variety we deal with will be assumed to be its singular cohomology with complex coefficients with respect to the analytic topology. Furthermore, every variety will be assumed to be defined over complex numbers, unless mentioned otherwise, and we will identify it with its set of complex points.} of $\Conf^{n}(\Ep)$. In particular, since $w : \bZ_{\geq 0} \ra \bZ_{\geq 0}$ is injective, the rational function computes all the Hodge numbers.
\end{thm}

\

\begin{rmk} We note that $\Conf^{n}(E^{\times})$ is smooth but not projective (nor proper), so to discuss its Hodge theory, we need Deligne's mixed Hodge structure, introduced in \cite{Del}. Even the first statement of Theorem \ref{main1} about the Betti numbers of $\Conf^{n}(E^{\times})$ will be deduced from the following, a vanishing statement about the Hodge numbers, which is our major contribution. Thus, the computation of the Betti numbers can be regarded as a concrete application of Deligne's theory.
\end{rmk}

\

\begin{thm}\label{main2} For any $i \in \bZ_{\geq 0}$, the mixed Hodge structure of the $i$-th cohomology group $H^{i}(\Conf^{n}(\Ep))$ of $\Conf^{n}(\Ep)$ is pure of weight $w(i)$, the number mentioned in Theorem \ref{main1}. In other words, we have $h^{p,q}(H^{i}(\Conf^{n}(\Ep))) = 0$ unless $p + q = w(i)$.
\end{thm}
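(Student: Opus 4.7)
The plan is to reduce to an $S_n$-equivariant statement about the ordered configuration space $F(\Ep,n)$ and then compute using a Totaro-style DGA model. Since $S_n$ acts freely on $F(\Ep,n)$ with quotient $\Conf^n(\Ep)$, descent along this \'etale cover gives a mixed Hodge structure isomorphism $H^i(\Conf^n(\Ep)) \cong H^i(F(\Ep,n))^{S_n}$; thus it suffices to prove the purity statement for $H^i(F(\Ep,n))^{S_n}$.

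Translation invariance of the elliptic curve bridges this to the projective case. The map $(x_1,\ldots,x_n,y) \mapsto (x_1-y,\ldots,x_n-y,y)$ is an $S_n$-equivariant algebraic isomorphism $F(E,n+1) \xra{\sim} F(\Ep,n)\times E$, yielding by K\"unneth an MHS isomorphism
\[
H^*(F(E,n+1))^{S_n} \;\cong\; H^*(F(\Ep,n))^{S_n} \otimes H^*(E).
\]
Since $H^k(E)$ is pure of weight $k$, purity of $H^i(F(\Ep,n))^{S_n}$ at weight $w(i)$ is equivalent (by unravelling the K\"unneth decomposition) to an assertion that in each cohomological degree $k$ the invariant cohomology $H^k(F(E,n+1))^{S_n}$ has weights confined to $\{w(k), w(k)-1\}$ with specific multiplicities.

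For $E$ a smooth projective curve, I invoke Totaro's theorem that the Leray spectral sequence for $F(E,n+1)\hookrightarrow E^{n+1}$ collapses at the $E_2$ page, giving an explicit DGA model
\[
(A,d_1) \;=\; \Bigl(H^*(E)^{\otimes(n+1)} \otimes \Lambda(G_{ij})_{1\leq i<j\leq n+1}\big/(\mathrm{Arnold}),\ \ d_1 G_{ij} = p_{ij}^*[\Delta_E]\Bigr),
\]
in which each $G_{ij}$ has cohomological degree $1$ and Hodge weight $2$ (since $[\Delta_E]\in H^2(E\times E)$ is of type $(1,1)$). A monomial $\omega\cdot G_{i_1j_1}\cdots G_{i_rj_r}$ with $\omega\in H^m(E^{n+1})$ therefore has cohomological degree $m+r$ and Hodge weight $m+2r$, so in cohomological degree $k$ the Hodge weight exceeds $k$ by exactly the number $r$ of $G$-factors.

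The technical heart of the proof is to show that, after taking $d_1$-cohomology and $S_n$-invariants, in each cohomological degree $k$ only two values of $r$ contribute to $H^k(F(E,n+1))^{S_n}$, giving the weights $w(k)$ and $w(k)-1$ with the multiplicities forced by the desired K\"unneth decomposition. I would approach this via an $S_n$-character/plethystic computation: the Orlik--Solomon-type subalgebra generated by $\{G_{ij}\}$ carries a well-understood $S_n$-representation, and its interaction with the $S_n$-action on $H^*(E)^{\otimes n}$ on the first $n$ indices can be tracked through a Getzler--Kapranov-style exponential formula for the $S_n$-equivariant Hodge--Deligne polynomial of configuration spaces of smooth projective varieties; extracting the trivial character then isolates the invariants. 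The parity-dependence of $w(i) = \lfloor 3i/2 \rfloor$ should emerge naturally from the odd cohomological degree of the weight-$1$ generators $\alpha_k,\beta_k \in H^1(E_k)$, which imposes a parity constraint on invariant monomials. Completing this combinatorial analysis---verifying that exactly the two predicted values of $r$ contribute and matching multiplicities---is the main obstacle.
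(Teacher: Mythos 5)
Your skeleton --- pass to $S_n$-invariants, use Totaro's model for the Leray spectral sequence, and read off weights from the number of $G$-factors --- is also the paper's skeleton, but the proposal has a genuine gap: the step you label ``the main obstacle'' \emph{is} the theorem. Everything before it is formal; the actual content is to show that, after taking $d_2$-cohomology and $S_n$-invariants, only $r\in\{\lfloor k/2\rfloor-1,\,\lfloor k/2\rfloor\}$ $G$-factors survive in cohomological degree $k$. The paper devotes all of Section \ref{proof} to exactly this: Lemma \ref{rel4} kills symmetrized chains $g_{i_1,\dots,i_r}$ with $r\geq 3$ by comparison with $H^{\bl}(\Conf^{n}(\bA^{1}))$ and Arnol'd's vanishing of $H^{\geq 2}(\Conf^{n}(\bA^{1}))$; Lemma \ref{rel3} kills monomials containing two free $x$'s (or two disjoint $g$'s) by a sign argument under a transposition; Lemma \ref{rel5} then reduces every invariant to the form $e_{S_{n}}(g_{i_1,i_2}^{r}x_j^{s_1}y_k^{s_2}x_{J_1}\cdots y_{K_c})$ with $r,s_1,s_2\in\{0,1\}$; Lemmas \ref{rel6}--\ref{rel7} prove a nonvanishing statement (via the auxiliary algebras $V(n)$ and $W(n)$ and an explicit linear retraction) needed to see that the case $(r,s_1,s_2)=(1,0,0)$ has nonzero differential and that $(0,1,1)$ is exactly its image; and the final table checks all eight cases. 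Gesturing at a ``Getzler--Kapranov exponential formula'' or a ``plethystic computation'' does not substitute for this; in particular nothing in your sketch explains why symmetrized products of three or more $G$'s vanish, which is where the confinement of $r$ to two values actually comes from.

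A secondary problem is that the reduction $F(E,n+1)\cong F(\Ep,n)\times E$ moves the computation to a strictly harder algebra. The paper works directly with $X=\Ep$ because $H^{2}(\Ep)=0$ gives the relation $x_iy_i=0$ and collapses the diagonal class to the two-term expression $[\Dl]=y_1\wg x_2-x_1\wg y_2$ (Lemma \ref{diag}); these two facts drive essentially every vanishing lemma (e.g.\ $d(x_{ij})=0$ uses $x_iy_i=0$ and $x_i^2=0$). On the projective curve the diagonal class carries the two extra terms $x_1\wg y_1+x_2\wg y_2$ and $x_iy_i\neq 0$, so the monomial bookkeeping you defer would involve more cases, not fewer. (The K\"unneth reduction itself is sound as a one-way implication: confining the weights of $H^{k}(F(E,n+1))^{S_n}$ to $\{w(k),w(k)-1\}$ for all $k$ does force purity of each $H^{i}(F(\Ep,n))^{S_n}$, using all three factors $H^{0},H^{1},H^{2}$ of $E$, and no multiplicity matching is needed. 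But it buys you nothing, since the statement you would then have to prove is at least as hard as the one you started with.)
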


\

\begin{rmk} There are direct genus $0$ analogues of Theorem \ref{main1} and Theorem \ref{main2} with $\bP^{1}(\bC)$ replacing $E$ and thus with $\bA^{1}(\bC) = \bC$ replacing $\Ep$. It follows from \cite[Theorem 1]{Kim} that the $i$-th cohomology group of $\Conf^{n}(\bC)$ has a pure Hodge structure of weight $2i$, meaning that $h^{p,q}(H^{i}(\Conf^{n}(\bC))) = 0$ unless $p + q = 2i$. This is the genus $0$ analogue of Theorem \ref{main2}, which is the genus $1$ case. Just as our paper will deduce Theorem \ref{main1} from Theorem \ref{main2}, this genus $0$ purity result implies that 

\begin{align*}
\sum_{n=0}^{\infty}\sum_{i=0}^{\infty}(-1)^{i}h^{i}(\Conf^{n}(\bC)) u^{2n - 2i}t^{n} &= \frac{1 - u^{2}t^{2}}{1 - u^{2}t} \\
&= 1 + u^{2}t + (u^{4} - u^{2})t^{2} + (u^{6} - u^{4})t^{3} + \cdots,
\end{align*}

\

which is analogous to the first part\footnote{One may also deduce the analogue of the second part of Theorem \ref{main2}, but we omit this discussion.} of Theorem \ref{main1}. One may note that replacing $u^{2}$ with $q$ recovers the generating function for $|\Conf^{n}(\bA^{1})(\bF_{q})|$, introduced in the beginning.

\

\hspace{3mm} For the case of $\Conf^{n}(\bC)$, in contrast to our case of $\Conf^{n}(\Ep)$, as seen in \cite{Kim}, the $i$-th cohomology group $H^{i}(F(\bC, n))$ of the ordered configuration space $F(\bC, n)$ of $n$ points on $\bC$ has a pure Hodge structure of weight $2i$, which implies that $H^{i}(\Conf^{n}(\bC))$ also has a pure Hodge structure of weight $2i$. However, it turns out that $H^{i}(F(\Ep, n))$, replacing $\bC$ with $\Ep$, does not have a pure Hodge structure in general (e.g., Example 4.2 of \cite{Bib}), so our approach to Theorem \ref{main2} has to be significantly different. We use the fact that the $S_{n}$-invariance of the Leray spectral sequence $\{E_{p,q}^{r}\}$ of the inclusion

$$F(\Ep, n) \hra (\Ep)^{n}$$

\

converges to the cohomology group we want to compute and degenerates at the $E_{3}$ page, which turns out to remember the weight filtration of the cohomology. (Recall that for showing the purity, understanding the weight filtration is enough.) Moreover, the $E_{2}$ page of this spectral sequence can be explicitly described using a work of Totaro in \cite{Tot}, so we use this description, which we review in Section \ref{totaro}, to approach our problem in an explicit manner.
\end{rmk}

\

\subsection{Related works and future directions} The literature of cohomology of the (either ordered or unordered) configuration spaces of a manifold is extensive, so we refer to only a few of them directly related to our work. The Betti numbers $h^{i}(\Conf^{n}(E^{\times}))$ were first computed by B\"odigheimer and Cohen \cite{BC}. (See \cite[Proposition 3.5]{DK} for more explicit computations), and it was also studied by by Napolitano \cite[p.489, Table 3]{Nap}. The new features of Theorem \ref{main1} are the computation of Hodge numbers $h^{p,q}(H^{i}(\Conf^{n}(E^{\times})))$ and the explicit rational generating functions for either Betti or Hodge numbers. Jesse Wolfson pointed out that the rationality we exhibit comes from the fact that we take the $S_{n}$-invariants (i.e., the main result in \cite{Mac} works for any graded symmetric algebra), although explicitly computing such rational functions may require more, just as we used purity (Theorem \ref{main2}) in our paper. A natural question is to ask if we can compute Hodge numbers in other cases, which seems unclear as our proof seems quite particular to the case we deal with. The problem also does not restrict to configuration spaces; for example, see Farb, Wolfson, and Wood \cite[Theorem 3.1, Statement 3]{FWW}, which is a generalization of Totaro's work in \cite{Tot}.

\

\subsection{Organization of the paper} In Section \ref{implication}, we explain why the purity statement given by Theorem \ref{main2} implies Theorem \ref{main1}. We review Totaro's work in Section \ref{totaro}, which is a key tool for our proof of Theorem \ref{main2}. In Section \ref{proof}, we prove Theorem \ref{main2}.

\

\subsection{Acknowledgments} We thank Christin Bibby, Haoyang Guo, James Hotchkiss, Mircea Musta\c{t}\u{a}, Will Sawin, John Stembridge, Burt Totaro, and Jesse Wolfson for helpful discussions. Huang was supported by Research Training Grant (RTG): Number Theory and Representation Theory at the University of Michigan while completing this work.

\

\section{Theorem \ref{main2} implies Theorem \ref{main1}} \label{implication}

\hspace{3mm} In this section, we explain how Theorem \ref{main2} implies Theorem \ref{main1}. Our work depends on the mixed Hodge structure on the cohomology of algebraic varieties, and we use some of their known properties.\footnote{These properties are reviewed, for example, in a paper by Danilov and Khovanski \cite[Section 1]{DK86}.} Consider the polynomial

$$\chi(X, u) := \sum_{i, j = 0}^{\infty} (-1)^{i} \dim_{\bC}(\Gr_{W}^{j}(H^{i}_{c}(X)))u^{j},$$

\

where $H^{i}_{c}(X)$ means the $i$-th compactly supported (singular) cohomology group of a variety $X$ with complex coefficients and $\Gr_{W}^{j}(H^{i}_{c}(X))$ means its $j$-th graded quotient with respect to the weight filtration on $H^{i}_{c}(X)$.

\

\begin{exmp} If $X$ is smooth and projective, then the Hodge structure on $H^{i}_{c}(X) = H^{i}(X)$ is pure of weight $i$.\footnote{In this case, we simply say that ``$H^{i}_{c}(X)$ is pure of weight $i$,'' and similarly for other weights for any pure Hodge structure.} In other words, we have

$$\Gr^{j}_{W}(H^{i}_{c}(X)) = \left\{
	\begin{array}{ll}
	H^{i}_{c}(X) & \mbox{if } j = i \mbox{ and}  \\
	0 & \mbox{if } j \neq i.
	\end{array}\right.$$

\

Thus, in this case, we have

$$\chi(X, u) = \sum_{i = 0}^{\infty}(-1)^{i} h^{i}_{c}(X)u^{i} = \sum_{i = 0}^{\infty}(-1)^{i} h^{i}(X)u^{i},$$

\

where $h^{i}_{c}(X) := \dim_{\bC}(H^{i}_{c}(X))$. This implies that $\chi(X, 1) = \chi(X)$, the Euler characteristic of $X$. For our purpose, we will only deal with varieties that are smooth, but not necessarily projective nor proper.
\end{exmp}

\

\hspace{3mm} For any closed subvariety $Z$ of any variety $X$, it is known (e.g., from \cite[p.198]{FM}) that

$$\chi(X, u) = \chi(Z, u) + \chi(X \sm Z, u).$$

\

Since the weight filtration on $H^{i}_{c}(X)$ is compatible with the K\"unneth formula, we also get

$$\chi(X \times Y, u) = \chi(X, u)\chi(Y, u),$$

\

for any two varieties $X$ and $Y$. We will call these two properties the \textbf{motivic properties} of $\chi(-, u)$. Given a quasi-projective variety $X$, the $n$-th symmetric power $\Sym^{n}(X) = X^{n}/S_{n}$ exists as a variety for every $n \in \bZ_{\geq 0}$, whose set of $\bC$-points corresponds to the quotient topological space. We define two power series

$$Z(X, u, t) := \sum_{n=0}^{\infty}\chi(\Sym^{n}(X), u)t^{n}$$

\

and

$$K(X, u, t) := \sum_{n=0}\chi(\Conf^{n}(X), u)t^{n}.$$

\

Due to the motivic properties of $\chi(-, u)$, it follows from Proposition 5.9 of \cite{VW} that

$$K(X, u, t) = \frac{Z(X, u, t)}{Z(X, u, t^{2})}.$$

\

\

\subsection{Theorem \ref{main2} implies the first part of Theorem \ref{main1}} Consider a smooth quasi-projective variety $X$. If $H^{i}(X)$ is pure of weight $w(i)$, possibly different from $i$, then $H^{2n-i}_{c}(X)$ is pure of weight $2n - w(i)$, so

\begin{align*}
\chi(X, u) &= \sum_{i = 0}^{\infty}(-1)^{i} h^{2n-i}_{c}(X)u^{2n-w(i)} \\
&= \sum_{i = 0}^{\infty}(-1)^{i} h^{i}(X)u^{2n-w(i)},
\end{align*}

\

where $n$ is the (complex) dimension of $X$. Hence, if we assume Theorem \ref{main2}, the first part of Theorem \ref{main1} merely says

$$K(\Ep, u, t) = \frac{(1 - ut)^{2}(1 - u^{2}t^{2})}{(1 - ut^{2})^{2}(1 - u^{2}t)}.$$

\

Using a long exact sequence and Poincar\'e duality, we observe that $H^{i}(\Ep) \simeq H^{i}(E)$ for $i = 0, 1$ and that these isormophisms preserve the mixed Hodge structures.\footnote{See \cite[p.282]{DK86}.} Since $H^{i}(\Ep) = 0$ for $i \geq 2$, we see that each $H^{i}(\Ep)$ is pure of weight $i$ so that $H^{i}((\Ep)^{n})$ is pure of weight $i$, using the fact that the K\"uneeth isomorphism preserves the mixed Hodge structures. Thus, it follows that $H^{2n-i}_{c}((\Ep)^{n})$ is pure of weight $2n - i$ so that $H^{i}_{c}((\Ep)^{n})$ is pure of weight $i$. This implies that $H^{i}_{c}(\Sym^{n}(\Ep))$ is pure of weight $i$ because

$$H^{i}_{c}(\Sym^{n}(\Ep)) \simeq H^{i}_{c}((\Ep)^{n})^{S_{n}} \hra H^{i}_{c}((\Ep)^{n})$$

\

induced by the quotient map $(\Ep)^{n} \ra (\Ep)^{n}/S_{n} = \Sym^{n}(\Ep)$, which is a finite map, and all the maps above are strictly compatible with the mixed Hodge structures. Thus, we have

$$\chi(\Sym^{n}(\Ep), u) = \sum_{i=0}^{\infty}(-1)^{i} h^{i}_{c}(\Sym^{n}(\Ep))u^{i},$$

\

and by a formula due to Macdonald (originally from \cite{Mac} but we use the version given in Cheah's thesis \cite[p.116]{Che} for the compactly supported cohomology), we have

\begin{align*}
Z(\Ep, u, t) &= \sum_{n=0}^{\infty}\chi(\Sym^{n}(\Ep), u) t^{n} \\
&= \frac{(1 - ut)^{h^{1}_{c}(\Ep)}}{(1 - t)^{h^{0}_{c}(\Ep)}(1 - u^{2}t)^{h^{2}_{c}(\Ep)}} \\
&= \frac{(1 - ut)^{h^{1}(\Ep)}}{(1 - t)^{h^{2}(\Ep)}(1 - u^{2}t)^{h^{0}(\Ep)}} \\
&= \frac{(1 - ut)^{2}}{1 - u^{2}t}.
\end{align*}

\

Therefore, we have

$$K(\Ep, u, t) = \frac{Z(\Ep, u, t)}{Z(\Ep, u, t^{2})} = \frac{(1 - ut)^{2}(1 - u^{2}t^{2})}{(1 - u^{2}t)(1 - ut^{2})^{2}},$$

\

so this proves that Theorem \ref{main2} implies the first part of Theorem \ref{main1}.

\

\subsection{Theorem \ref{main2} implies the second part of Theorem \ref{main1}} For any variety $X$, consider the mixed Hodge polynomial 

$$\mb{H}_{c}(X,x,y,u):=\sum_{p,q,i \geq 0} h^{p,q}(H^{i}_{c}(X))) x^{p} y^{q} (-u)^{i}.$$

\

This is a generating function for the Hodge numbers

$$h^{p,q}(H^{i}_{c}(X))) := \dim_{\bC}(\Gr^{F}_{p}\Gr_{W}^{p+q}(H^{i}_{c}(X)))$$

\

where ${\Gr}^{F}_{p}$ denotes the $p$-th graded piece of the Hodge filtration. Consider the following generating function:

$$\mb{Z}(X,x,y,u,t) := \sum_{n=0}^{\infty} \mb{H}_{c}(\Sym^n(X),x,y,u) t^{n},$$

\

defined for any quasi-projective variety $X$. Using the fact that the Hodge numbers of $E$ are symmetric (i.e., $h^{p,q}(E) = h^{q,p}(E)$), we can compute each Hodge number $h^{p,q}(E)$. From this and a long exact sequence, we can compute the Hodge numbers of $\Ep$ so that
 
$$\mb{H}_{c}(\Ep,x,y,u)=-(x+y)u+xyu^2.$$

\

Hence, using a formula due to Cheah \cite[p.116]{Che}, we obtain that the generating function for $X = \Ep$ is given by

$$\mb{Z}(\Ep,x,y,u,t)= \frac{(1-xut)(1-yut)}{(1-xyu^2t)}.$$

\

Recall that $\mb{H}_{c}(-,x,y,1)$ satisfies the motivic properties (e.g., from \cite[Propositions 1.6 and 1.8]{DK86}). By \cite[Proposition 5.9]{VW}, we have

\begin{align*}
\sum_{n=0}^\infty \chi(\Conf^{n}(\Ep),x,y,1) t^{n} &= \frac{\mathbf{Z}(\Ep,x,y,1,t)}{\mathbf{Z}(\Ep,x,y,1,t^{2})} \\
&= \frac{(1-xt)(1-yt)(1-xyt^2)}{(1-xyt)(1-xt^2)(1-yt^2)}.
\end{align*}

\

This implies that

\begin{align*}
\sum_{n=0}^{\infty} \sum_{p,q\geq 0}\sum_{i = 0}^{\infty}  (-1)^{i} h^{p,q}(H^{2n-i}_{c}(\Conf^{n}(\Ep))) x^p y^q t^n &= \sum_{n=0}^{\infty} \sum_{p,q\geq 0}\sum_{i = 0}^{\infty}  (-1)^{i} h^{p,q}(H^{i}_{c}(\Conf^{n}(\Ep))) x^p y^q t^n \\
&= \frac{(1-xt)(1-yt)(1-xyt^2)}{(1-xyt)(1-xt^2)(1-yt^2)}.
\end{align*}

\

We now use Theorem \ref{main2}, which tells us that $h^{p,q}(H^{2n-i}_{c}(\Conf^{n}(\Ep))) = h^{n-p,n-q}(H^{i}(\Conf^{n}(\Ep))) = 0$ unless $p + q = 2n - w(i)$, so

$$\sum_{n=0}^{\infty} \sum_{\substack{p,q\geq 0, \\ p+q = 2n-w(i)}}\sum_{i = 0}^{\infty}  (-1)^{i} h^{n-p,n-q}(H^{i}(\Conf^{n}(\Ep))) x^p y^q t^n = \frac{(1-xt)(1-yt)(1-xyt^2)}{(1-xyt)(1-xt^2)(1-yt^2)}.$$

\

Thus, we may replace $x$ and $y$ by $xu$ and $yu$ respectively to get

$$\sum_{n=0}^{\infty} \sum_{\substack{p,q\geq 0, \\ p+q = 2n-w(i)}}\sum_{i = 0}^{\infty}  (-1)^{i} h^{n-p,n-q}(H^{i}(\Conf^{n}(\Ep))) x^{p} y^{q}u^{2n-w(i)} t^{n} = \frac{(1-xut)(1-yut)(1-xyu^{2}t^2)}{(1-xyut)(1-xut^{2})(1-yut^{2})},$$

\

and the above identity holds without specifying the conditions $p+q = 2n-w(i)$, because all the coefficients on the left-hand side violating such conditions are equal to $0$ by Theorem \ref{main2}. Thus, we obtain the second part of Theorem \ref{main1} from Theorem \ref{main2}.

\

\section{Totaro's description of a Leray spectral sequence} \label{totaro}

\subsection{Recalling Poincar\'e duality} We recall the following explicit form of the Poincar\'e duality (e.g., from \cite[Theorem 24.18]{Ful}). That is, given any oriented real manifold $M$ of dimension $m$, the Poincar\'e duality is given by the $\bR$-linear isomorphism

$$H^{i}(M, \bR) \simeq H^{m - i}_{c}(M, \bR)^{\vee}$$

\

given by

$$[\omega] \mapsto \left([\mu] \mapsto \int_{M} \omega \wedge \mu \right),$$

\

where $H^{i}(M, \bR)$ denotes the $i$-th de Rham cohomology of $M$ with real coefficients and similarly for the compactly supported de Rham cohomology. We wrote $V^{\vee} := \Hom_{k}(V, k)$ for the dual vector space of vector space $V$ over a field $k$. When $M$ is a complex manifold of (complex) dimension $n$, then it is an oriented real manifold of dimension $2n$, so applying $( - ) \ot_{\bR} \bC$ to the above isomorphism gives 

$$H^{i}(M) \simeq H^{2n - i}_{c}(M)^{\vee}.$$

\

\subsection{Totaro's work} Our approach to attack Theorem \ref{main2} is to use the Leray spectral sequence of the inclusion $F(X, n) \hra X^{n}$ for a variety $X$, where

$$F(X, n) := \{(x_{1}, \dots, x_{n}) \in X^{n} : x_{i} \neq x_{j} \text{ whenever } i \neq j\}$$

\

so that $\Conf^{n}(X) = F(X, n)/S_{n}$. We are only interested in the case $X = \Ep$. The $E_{2}$ page of this spectral sequence is described by Totaro in \cite{Tot}, and we closely follow \cite[Theorem 3]{Tot}.  For our specific $X = \Ep$, the differentials $d_{r} : E_{r}^{p,q} \ra E_{r}^{p+r, q+1-r}$ are only possibly nonzero at the page $r = 2$, so $E_{\infty}^{p,q} = E_{3}^{p,q}$. Most importantly, We have

$$E_{3}^{p,q} = \Gr_{p+2q}(H^{p+q}(F(X, n))).$$

\

We note that the variety $X$ in Totaro's proof is assumed to be smooth projective, while $X = \Ep$ we use here is not projective despite being smooth. Nevertheless, the assumption that $X$ is smooth and projective is to ensure that $H^{i}(X^{n})$ is pure of weight $i$. Since we have already checked $H^{i}((\Ep)^{n})$ is pure of weight $i$ in Section \ref{implication}, we can apply the same argument. Totaro's description of the $E_{2}$ page is given with respect to the \textbf{diagonal class} $[\Dl] \in H^{2}(X^{2})$ of $X = \Ep$. By definition, this is the image of $1 \in H^{0}(X)$ via the composition

$$H^{0}(X) \simeq H^{2}_{c}(X)^{\vee} \ra H_{c}^{2}(X^{2})^{\vee} \simeq H^{2}(X^{2}),$$

\

where the isomorphisms are given by the Poincar\'e duality and the middle map is given by taking the dual of the pullback $H^{2}_{c}(X^{2}) \ra H_{c}^{2}(X)$ of the diagonal map $\delta : X \hra X^{2}$. Following the maps above, we have

$$1 \mapsto \left([\eta] \mapsto \int_{X} \eta\right) \mapsto \left([\omega] \mapsto \int_{X} \delta^{*}(\omega) \right) = \left([\omega] \mapsto \int_{X^{2}} \Dl \wg \om \right)\mapsfrom [\Dl],$$

\

so the diagonal class $[\Dl]$ is characterized by the conditions

$$\int_{X} \delta^{*}(\om) = \int_{X^{2}} \Dl \wg \om$$

\

where $\om$ varies over all compactly supported $2$-forms on $X^{2}$ so that $[\om] \in H^{2}_{c}(X^{2})$. Our situation $X = \Ep$ is special in the sense that $[\Dl]$ can be explicitly computed in terms of two generators of $H^{1}(X)$. (Note that  $h^{0}(X) = 1$ and $h^{1}(X) = 2$ while $h^{i}(X) = 0$ for all $i \geq 2$.) We set some convenient notation to state this fact: denoting by $p_{1}, p_{2} : X^{2} \ra X$ the two projections, for any $\alpha \in H^{\bl}(X)$, we write $\alpha_{i} := p_{i}^{*}(\alpha) \in H^{\bl}(X^{2})$. 

\

\begin{lem}\label{diag} It is possible to choose a basis $x, y \in H^{1}(X)$ such that

$$[\Dl] = y_{1} \wg x_{2} - x_{1} \wg y_{2} \in H^{2}(X^{2}).$$

\
\end{lem}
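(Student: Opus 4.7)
The plan is to compute $[\Dl] \in H^2(X^2)$ directly in the K\"unneth basis, where $X = \Ep$. Since $h^0(X) = 1$, $h^1(X) = 2$, and $h^i(X) = 0$ for $i \geq 2$, K\"unneth gives $H^2(X^2) \cong H^1(X) \ot H^1(X)$, so for any basis $x, y$ of $H^1(X)$ the four classes $x_1 \wg x_2$, $x_1 \wg y_2$, $y_1 \wg x_2$, $y_1 \wg y_2$ span $H^2(X^2)$, and I would write $[\Dl] = A\, x_1 \wg x_2 + B\, x_1 \wg y_2 + C\, y_1 \wg x_2 + D\, y_1 \wg y_2$ with unknowns $A, B, C, D \in \bC$ to be determined.

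To pin down these coefficients I would test the characterization $\int_X \delta^*(\om) = \int_{X^2} [\Dl] \wg \om$ against a basis of $H^2_c(X^2)$. Since $X$ is non-compact and connected, $H^0_c(X) = 0$, so K\"unneth for compact supports gives $H^2_c(X^2) \cong H^1_c(X) \ot H^1_c(X)$. Poincar\'e duality pairs $H^1(X)$ with $H^1_c(X)$ via the wedge-integral, so let $\alpha, \beta \in H^1_c(X)$ be the dual basis to $x, y$, meaning $\int_X x \wg \alpha = \int_X y \wg \beta = 1$ and $\int_X x \wg \beta = \int_X y \wg \alpha = 0$. Plugging $\om = p_1^*(\mu_1) \wg p_2^*(\mu_2)$ for $\mu_1, \mu_2 \in \{\alpha, \beta\}$ into both sides and expanding the right-hand side via Fubini (with the sign from transposing two $1$-forms) gives four linear equations. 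Two structural simplifications collapse the system: the form-level identities $\alpha \wg \alpha = \beta \wg \beta = 0$ force $A = D = 0$, and the graded-commutativity $\int_X \alpha \wg \beta = -\int_X \beta \wg \alpha$ combined with the remaining two tests forces $B = -C$. Setting $k := \int_X \alpha \wg \beta$, the outcome is $[\Dl] = k(y_1 \wg x_2 - x_1 \wg y_2)$ for any initial choice of basis.

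The principal task that remains, and the main obstacle, is to verify $k \neq 0$, for then rescaling $x \mapsto x/k$ normalizes the coefficient to $1$ and yields the lemma. The cleanest route is to choose $x = j^*(a)$ and $y = j^*(b)$, where $j : \Ep \hra E$ is the inclusion and $a, b \in H^1(E)$ is a symplectic basis with $\int_E a \wg b = 1$. The extension-by-zero $j_! : H^\bl_c(\Ep) \to H^\bl(E)$ is an isomorphism that commutes with wedges and integrals, and the projection formula $\int_X j^*(\eta) \wg \mu = \int_E \eta \wg j_!(\mu)$ pins down $j_!(\alpha) = b$ and $j_!(\beta) = -a$ from the defining properties of $\alpha$ and $\beta$. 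Consequently $k = \int_X \alpha \wg \beta = \int_E j_!(\alpha) \wg j_!(\beta) = \int_E b \wg (-a) = \int_E a \wg b = 1$, completing the plan. The only delicate point in this last step is the compatibility of Poincar\'e duality with the open inclusion $j$; everything else is bookkeeping.
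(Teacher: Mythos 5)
Your argument is correct and reaches the stated formula, but it is organized genuinely differently from the paper's proof. You work entirely on the open surface $X=\Ep$: you expand $[\Dl]$ with undetermined coefficients in the K\"unneth basis of $H^{2}(X^{2})\simeq H^{1}(X)\ot H^{1}(X)$, test the defining identity $\int_{X}\delta^{*}(\om)=\int_{X^{2}}[\Dl]\wg\om$ against the dual basis $\alpha,\beta$ of $H^{1}_{c}(X)$, and correctly land on $[\Dl]=k\,(y_{1}\wg x_{2}-x_{1}\wg y_{2})$ with $k=\int_{X}\alpha\wg\beta$; the remaining work is the normalization $k=1$, which you obtain by transporting to $E$ via $j_{!}$ and the projection formula. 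The paper instead does the whole computation on the compact surface $E$: it writes down the candidate $x_{1}\wg y_{1}+y_{1}\wg x_{2}-x_{1}\wg y_{2}+x_{2}\wg y_{2}$ for $[\Dl_{E}]$, verifies it against a basis of $H^{2}(E^{2})$ (where Poincar\'e duality is a perfect pairing on ordinary cohomology, so no compactly supported bookkeeping is needed), and then restricts to $X^{2}$, using that $H^{2}(X)=0$ kills $x_{1}\wg y_{1}$ and $x_{2}\wg y_{2}$ and that the restriction $H^{2}(E^{2})\ra H^{2}(X^{2})$ carries $[\Dl_{E}]$ to $[\Dl_{X}]$ by functoriality of Gysin maps. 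Your route buys a direct verification of the defining property of $[\Dl_{X}]$ on $X$ itself, at the price of invoking the compactly supported K\"unneth formula, the perfectness of the $H^{1}(X)\times H^{1}_{c}(X)$ pairing, and the multiplicativity of $j_{!}$ with respect to wedge and integration (all true, but each needing a word of justification); the paper's route buys a shorter computation at the price of the one functoriality assertion about Gysin maps. One small slip: to normalize $k\,(y_{1}\wg x_{2}-x_{1}\wg y_{2})$ to coefficient $1$ you should rescale $x\mt kx$, not $x\mt x/k$ --- but this is moot, since your explicit choice $x=j^{*}(a)$, $y=j^{*}(b)$ already gives $k=1$.
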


\begin{proof} We use the notation $[\Delta_{E}]$ and $[\Delta_{X}]$ to mean the diagonal classes of $E$ and $X$, respectively. First, it is well-known that there are closed 1-forms on $E$ such that their cohomology classes $x, y \in H^{1}(E, \bR) \simeq \bR^{2}$ form a basis for $H^{1}(E, \bR) \simeq \bR^{2}$ and

$$\int_{E} x \wg y =  1.$$

\

We also use $x, y$ to mean the basis $x \ot 1, y \ot 1 \in H^{1}(E, \bR) \ot_{\bR} \bC \simeq \bC^{2}$. These $x, y$ are compactly supported in $E$ because $E$ is compact. We claim that

$$[\Dl_{E}] = x_{1} \wg y_{1} + y_{1} \wg x_{2} - x_{1} \wg y_{2} + x_{2} \wg y_{2}$$

\

on $E$. To check this, one can just check such the right-hand side satisfies the equations

$$\int_{E} \delta^{*}(\om) = \int_{E^{2}} \Dl_{E} \wg \om$$

\

for all $\om \in H^{2}(E^{2})$ in place of $\Dl_{E}$. Note that it is enough to show the identity for the following possibilities of $\om$:

\begin{itemize}
	\item $x_{1} \wedge x_{2}$ and $y_{1} \wedge y_{2}$;
	\item $x_{i} \wg y_{j}$ for $1 \leq i, j \leq 2$,
\end{itemize}

as they form a basis for $H^{2}(E^{2})$. Since $p_{i} \circ \delta = \id_{X}$ for $i = 1, 2$, we have $\delta^{*}(x_{1} \wedge x_{2}) = 0 = \delta^{*}(y_{1} \wedge y_{2})$ and $\delta^{*}(x_{i} \wedge y_{j}) = x \wg y$. On the other hand, we have

\begin{align*}
(x_{1} \wg y_{1} &+ y_{1} \wg x_{2} - x_{1} \wg y_{2} + x_{2} \wg y_{2}) \wg x_{1} \wg x_{2}  \\
&= 0 \\
&= (x_{1} \wg y_{1} + y_{1} \wg x_{2} - x_{1} \wg y_{2} + x_{2} \wg y_{2}) \wg y_{1} \wg y_{2},
\end{align*}

\

and

$$(x_{1} \wg y_{1} + y_{1} \wg x_{2} - x_{1} \wg y_{2} + x_{2} \wg y_{2}) \wg x_{i} \wg y_{j} = x_{1} \wg y_{1} \wg x_{2} \wg y_{2}.$$

\

Since

\begin{align*}
\int_{E^{2}} x_{1} \wg y_{1} \wg x_{2} \wg y_{2}  &= \int_{E^{2}} p_{1}^{*}(x \wg y) \wg p_{2}^{*}(x \wg y) \\
&= \left(\int_{E} x \wg y\right)^{2} \\
&= 1,
\end{align*}

\

this establishes the claim.

\

\hspace{3mm} To finish the proof, we first note that $x_{1} \wg y_{1}$ and $x_{2} \wg y_{2}$ pull back to zero on $X^{2} = (\Ep)^{2}$. Thus, the claim follows from our computation on $E$ because the restriction $H^{2}(E^{2}) \ra H^{2}(X^{2})$ maps the diagonal class of $E$ to that of $X$ (by functoriality of Gysin maps). This finishes the proof. 
\end{proof}

\

\hspace{3mm} Let $E_{r}^{p,q}(X, n)$ the $(p,q)$ component of the $r$-th page of the Leray spectral sequence of $F(X, n) \hra X^{n}$ and

$$E_{r}(X, n) := \bop_{i = 0}^{\infty}\bop_{p + q = i} E^{p,q}_{r}(X, n).$$

\

Totaro's description of $E_{2}(X, n)$ is as follows. We will only consider the case $X = \Ep$, but his results has a generalization for any oriented real manifolds. However, the degeneration at the $E_{3}$ page (i.e., $E_{3}(X, n) \simeq E_{\infty}(X, n)$) is particular to this case (or any variety whose $i$-th cohomology group is pure of weight $i$ for every $i \geq 0$).

\

%%%%%%%%%%%%%% Proofread starting from here

\begin{prop}[\cite{Tot}, Theorem 1]\label{E2} Keeping the notation as above, we have

$$E_{2}(X, n) = \frac{H^{\bl}(X^{n})[g_{ij} : 1 \leq i \neq j \leq n]}{(\mr{relations})},$$

\

where the right-hand side is a presentation for the bigraded-commutative algebra with the following relations:

\

\be
	\item $g_{ij} = g_{ji}$ for all $1 \leq i \neq j \leq n$;
	\item $g_{ik}g_{jk} = - g_{ij}(g_{ik} + g_{jk})$ for all distinct $1 \leq i, j, k \leq n$;
	\item $\alpha_{i} g_{ij} = \alpha_{j} g_{ij}$ for all $1 \leq i \neq j \leq n$ and $\alpha \in H^{\bl}(X^{n})$,
\ee

\

and the elements of $H^{p}(X^{n})$ get degree $(p, 0)$ while each $g_{ij}$ gets degree $(0, 1)$. The differential 

$$d = d_{2}^{p,q} : E_{2}^{p,q}(X, n) \ra E_{2}^{p+2, q - 1}(X, n)$$

\

is given by sending everything in $H^{\bl}(X^{n})$ to $0$ and

$$d(g_{ij}) = p_{ij}^{*}([\Dl]),$$

\

where $p_{ij} := (p_{i}, p_{j}) : X^{n} \ra X^{2}$. Moreover, the action of $S_{n}$ on $E_{2}(X, n)$ induced by permuting coordinates on $F(X, n) \sub X^{n}$ is described as follows: the $S_{n}$-action on $H^{\bl}(X^{n})$ is induced from the $S_{n}$-action on $X^{n}$ by permuting coordinates and

$$\sigma g_{ij} := g_{\sigma(i), \sigma(j)}$$

\

for $\sg \in S_{n}$.
\end{prop}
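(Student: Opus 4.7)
The plan is to analyze the Leray spectral sequence $E_2^{p,q} = H^p(X^n, R^q j_* \bC) \Rightarrow H^{p+q}(F(X,n))$ associated to the open inclusion $j : F(X,n) \hra X^n$. The first step is to compute the sheaves $R^q j_* \bC$. Because $F(X,n)$ is the complement in $X^n$ of the union of smooth complex codimension-one diagonals $\Dl_{ij} := \{x_i = x_j\}$, a local Gysin/Thom-class analysis near each $\Dl_{ij}$ produces a class $g_{ij} \in R^1 j_* \bC$ representing the $S^1$-linking fiber around $\Dl_{ij}$, and shows that $R^\bl j_* \bC$ is generated as a sheaf of graded algebras over $\bC_{X^n}$ by these classes. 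The $E_2$ page is then a quotient of $H^\bl(X^n)[g_{ij}]$ by the relations arising from the geometry of how the diagonals meet.

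Next I would establish the three relations. Symmetry $g_{ij}=g_{ji}$ is built into the construction of $g_{ij}$ as a linking class of the unordered pair $\{i,j\}$. The relation $\alpha_i g_{ij} = \alpha_j g_{ij}$ reflects that $g_{ij}$ is supported on a tubular neighborhood of $\Dl_{ij}$, along which $p_i$ and $p_j$ agree as maps to $X$, so classes pulled back from either slot restrict to the same thing on the support. The triple relation $g_{ik}g_{jk} = -g_{ij}(g_{ik}+g_{jk})$ is a purely local statement near the triple diagonal $\Dl_{ijk}$, where after a linear change of coordinates the three diagonals become three lines through the origin in $\bC^2$, so the claim reduces to the classical three-term Arnold/Orlik--Solomon relation, which I would verify by a direct cup-product computation in that chart.

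The differential $d_2 : E_2^{p,q} \to E_2^{p+2,q-1}$ is the only differential out of $E_2^{0,1}$ with potentially nonzero target, and its value on $g_{ij}$ must land in $E_2^{2,0} = H^2(X^n)$. Since $g_{ij}$ is by construction the Thom/linking class of the smooth codimension-one submanifold $\Dl_{ij}$, the standard Thom--Gysin identification forces $d_2(g_{ij}) = [\Dl_{ij}] = p_{ij}^*[\Dl]$; the Leibniz rule together with the vanishing of $d_2$ on $H^\bl(X^n) = E_2^{\bl,0}$ then determines $d_2$ on all of $E_2(X,n)$. The $S_n$-equivariance is tautological, since $\sg \in S_n$ permutes the diagonals by $\sg \cdot \Dl_{ij} = \Dl_{\sg(i)\sg(j)}$ and the classes $g_{ij}$ are naturally indexed by those diagonals.

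The main obstacle I expect is proving completeness of the listed relations: writing down symmetry, the support condition, and the triple relation is immediate, but showing no further relations exist between monomials in the $g_{ij}$'s and classes pulled back from $H^\bl(X^n)$ requires either an inductive Leray--Hirsch-style argument applied to the forgetful map $F(X,n) \to F(X,n-1)$ (whose fiber is $X$ minus $n-1$ points, with cohomology that can be computed separately and matched against the presented algebra in the variables $g_{in}$) or, as in Totaro's original proof, comparing with an explicit dg-algebra model and matching Hilbert series degree by degree.
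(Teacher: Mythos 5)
You have set out to prove this proposition from scratch, whereas the paper does not prove it at all: it is imported verbatim as Theorem 1 of Totaro's paper \cite{Tot}, and the only original content in the surrounding discussion is the observation that Totaro's projectivity hypothesis on $X$ enters his argument only through the purity of $H^{i}(X^{n})$, which the authors verify separately for $X = \Ep$ (the $E_{2}$-description itself holds for any oriented manifold, as the paper notes). Your sketch is, in outline, essentially Totaro's own argument: the stalkwise identification of $R^{q}j_{*}\bC$ with local Orlik--Solomon algebras of braid arrangements, the three listed relations with exactly the justifications you give (symmetry of the linking class, locality along $\Dl_{ij}$ for relation (3), the local Arnol'd relation for (2)), the Gysin identification $d_{2}(g_{ij}) = p_{ij}^{*}([\Dl])$, and the tautological $S_{n}$-equivariance. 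You are also right that the entire difficulty is completeness of the relations, i.e.\ that the presented algebra computes $\bop_{p,q} H^{p}(X^{n}, R^{q}j_{*}\bC)$ exactly; this is done in \cite{Tot} by decomposing $R^{q}j_{*}\bC$ as a direct sum of sheaves supported on the diagonal strata indexed by set partitions and computing the cohomology of each summand. One caution about your first proposed remedy: a Leray--Hirsch argument on the Fadell--Neuwirth fibration $F(X,n) \ra F(X,n-1)$ computes the abutment $H^{\bl}(F(X,n))$, not the bigraded $E_{2}$-page of the Leray spectral sequence of $F(X,n) \hra X^{n}$, so by itself it cannot certify the proposition as stated (matching total dimensions of the abutment only constrains $E_{\infty}$, and the paper crucially needs the bigrading of $E_{2}$ and $E_{3}$ because it encodes the weight filtration). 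The sheaf-level stratified computation, your second alternative, is the one that actually closes the gap.
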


\

\begin{rmk}\label{diff} By Lemma \ref{diag}, we necessarily have

$$d(g_{ij}) = y_{i} \wg x_{j} - x_{i} \wg y_{j},$$

\

where $x_{i} := p_{i}^{*}(x)$ and $y_{i} := p_{i}^{*}(y)$, writing $p_{i} : X^{n} \ra X$ to mean the $i$-th projection. To see this, recall that Lemma \ref{diag} says $[\Dl] = y_{1} \wg x_{2} - x_{1} \wg y_{2}$ so that

\begin{align*}
d(g_{ij}) &= p_{ij}^{*}([\Dl]) \\
&= p_{ij}^{*}(y_{1} \wg x_{2}) - p_{ij}^{*}(x_{1} \wg y_{2}) \\
&= p_{ij}^{*}(y_{1}) \wg p_{ij}^{*}(x_{2}) - p_{ij}^{*}(x_{1}) \wg p_{ij}^{*}(y_{2}) \\
&= p_{ij}^{*}(p_{1}^{*}(y)) \wg p_{ij}^{*}(p_{2}^{*}(x)) - p_{ij}^{*}(p_{1}^{*}(x)) \wg p_{ij}^{*}(p_{2}^{*}(y)) \\
&= (p_{1} \circ p_{ij})^{*}(y) \wg (p_{2} \circ p_{ij})^{*}(x) - (p_{1} \circ p_{ij})^{*}(x) \wg (p_{2} \circ p_{ij})^{*}(y) \\
&= p_{i}^{*}(y) \wg p_{j}^{*}(x) - p_{i}^{*}(x) \wg p_{j}^{*}(y) \\
&= y_{i} \wg x_{j} - x_{i} \wg y_{j}.
\end{align*}
\end{rmk}

\

\hspace{3mm} We have

$$E_{3}^{p,q}(X, n) = \Gr_{p+2q}(H^{p+q}(F(X,n))),$$

\

but it also turns out that 

$$E_{3}^{p,q}(X, n)^{S_{n}} = \Gr_{p+2q}(H^{p+q}(F(X,n)/S_{n})) = \Gr_{p+2q}(H^{p+q}(\Conf^{n}(X))),$$

\

and hence we can use the presentation of $E_{2}(X, n)$ described above to approach Theorem \ref{main2}.

\

\section{Proof of Theorem \ref{main2}} \label{proof}

\hspace{3mm} In this section, we prove Theorem \ref{main2}. Again, we work with $X = \Ep$, a punctured elliptic curve over $\bC$.

\

\subsection{Setup and goal}\label{setup} The first step is to note that our problem is entirely algebraic. That is, from the previous section, we have the following description of the graded-commutative $\bC$-algebra:

$$E_{2}(X, n) = \bC
\begin{bmatrix}
x_{1}, \dots, x_{n}, \\
y_{1}, \dots, y_{n}, \\
g_{ij} \text{ for } 1 \leq i \neq j \leq n
\end{bmatrix}/(\text{relations}),$$

\

where the relations are given by 

\be
	\item $g_{ij} = g_{ji}$ for all $1 \leq i \neq j \leq n$;
	\item $g_{ik}g_{jk} = - g_{ij}(g_{ik} + g_{jk})$ for all distinct $1 \leq i, j, k \leq n$;
	\item $g_{ij}x_{i} = g_{ij}x_{j}$ for all $1 \leq i \neq j \leq n$;
	\item $g_{ij}y_{i} = g_{ij}y_{j}$ for all $1 \leq i \neq j \leq n$;
	\item $x_{i}y_{i} = 0$ for all $1 \leq i \leq n$,
\ee

\

and the degrees of $x_{i}, y_{i}$ are $(1, 0)$, while the degree of each $g_{ij}$ is $(0, 1)$.\footnote{When it comes to graded-commutativity, the total degree of an element of degree $(p, q)$ is $p + q$.} By Remark \ref{diff}, the differential 

$$d = d_{2}^{p,q} : E_{2}^{p,q}(X, n) \ra E_{2}^{p+2, q - 1}(X, n)$$

\

is given by

$$d(g_{ij}) = y_{i}x_{j} - x_{i}y_{j}.$$

\

\textbf{Goal}. Proving Theorem \ref{main2} is equivalent to proving that $E_{3}^{p,q}(X, n)^{S_{n}} = 0$ unless $p - q = 0 \text{ or } 1$. Indeed, since $p + q = i$ and $p + 2q = w(i)$, we have $p = 2i - w(i)$ and $q = w(i) - i$ so that $p - q = 3i - 2w(i)$. This implies that $w(i) = (3i - (p-q))/2$.

\

\subsection{Special elements of $E_{2}(X, n)$} Since we are over $\bC$, whose characteristic is $0$, taking the $S_{n}$-invariants is an exact functor, so it commutes with taking cohomlogy. Thus, we may take the cohomology of $E_{2}(X,n)^{S_{n}}$ to compute $E_{3}(X,n)^{S_{n}}$. A typical element of $E_{2}(X, n)$ is a linear combination of the elements of the form

$$g_{i_{1},j_{1}} \cdots g_{i_{a},j_{a}} x_{k_{1}} \cdots x_{k_{b}} y_{l_{1}} \cdots y_{l_{c}}.$$

\

We may assume that all of $k_{1}, \dots, k_{b}, l_{1}, \dots, l_{c}$ are distinct because otherwise such an element is $0$ by graded commutativity or some of the relations we have above. In this section, we provide various lemmas about such elements that will help us prove Theorem \ref{main2}.

\

\begin{lem}\label{rel1} For any $r \geq 2$, we have

$$g_{1,2}g_{2,3} \cdots g_{r-1,r} g_{r,1}  = 0$$

\

in $E_{2}(X, n)$.
\end{lem}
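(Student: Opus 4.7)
The plan is to proceed by induction on $r$. For the base case $r = 2$, the product $g_{1,2}g_{2,1} = g_{1,2}^{2}$ vanishes because $g_{1,2}$ has total degree $1$ and so anti-commutes with itself by graded-commutativity, giving $2g_{1,2}^{2} = 0$ and hence $g_{1,2}^{2} = 0$ (as we are working over $\bC$).

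For the inductive step, it is convenient to prove the slightly more general statement that for \emph{any} tuple of distinct indices $a_{1}, \ldots, a_{r} \in \{1, \ldots, n\}$, the cycle $g_{a_{1}, a_{2}} g_{a_{2}, a_{3}} \cdots g_{a_{r-1}, a_{r}} g_{a_{r}, a_{1}}$ vanishes in $E_{2}(X, n)$; this is legitimate because the defining relations are symmetric under permutations of the indices (equivalently, under the $S_{n}$-action of Proposition \ref{E2}). The key move is to apply relation (2) to the final pair $g_{r-1, r}g_{r, 1}$ of the product $P := g_{1,2}g_{2,3} \cdots g_{r-1, r}g_{r, 1}$, since these two factors share the index $r$. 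Using $g_{r, 1} = g_{1, r}$ and relation (2) with $(i, j, k) = (r-1, 1, r)$, we get
$$g_{r-1, r}g_{r, 1} = -g_{r-1, 1}g_{r-1, r} - g_{r-1, 1}g_{r, 1}.$$
Substituting this into $P$ yields
$$P = -\bigl[g_{1,2}g_{2,3}\cdots g_{r-2, r-1}g_{r-1, 1}\bigr]g_{r-1, r} - \bigl[g_{1,2}g_{2,3}\cdots g_{r-2, r-1}g_{r-1, 1}\bigr]g_{r, 1}.$$
The bracketed factor is a cycle of length $r - 1$ on the indices $1, 2, \ldots, r-1$, so it vanishes by the inductive hypothesis, and therefore $P = 0$.

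There is no genuine obstacle here: the lemma is a clean combinatorial consequence of the Arnol'd-type relation (2) together with graded-commutativity. The only subtlety worth pointing out is why applying relation (2) to the closing edge produces a \emph{shorter} cycle rather than a longer or unrelated expression: because the three indices $(r-1, 1, r)$ appearing in the relation are already vertices of our cycle, the new factor $g_{r-1,1}$ is an internal chord that short-circuits the cycle $1 \to 2 \to \cdots \to r \to 1$ to the cycle $1 \to 2 \to \cdots \to r - 1 \to 1$, perfectly set up for the inductive hypothesis.
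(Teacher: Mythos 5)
Your proof is correct and takes essentially the same route as the paper's: induction on $r$, with relation (2) applied to the closing pair $g_{r-1,r}g_{r,1}$ to produce the shorter cycle $g_{1,2}\cdots g_{r-2,r-1}g_{r-1,1}$, which the inductive hypothesis kills. The detour through the more general statement for arbitrary distinct indices $a_1,\dots,a_r$ is harmless but unnecessary, since the bracketed factor is already the standard cycle on $1,\dots,r-1$ in the standard order.
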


\begin{proof} We proceed by induction. For $r = 2$, we have $g_{1,2}g_{2,1} = g_{1,2}^{2} = 0$ by relation (1) and graded commutativity. For the induction hypothesis, suppose that $g_{1,2}g_{2,3} \cdots g_{r-2,r-1} g_{r-1,1}  = 0$ with $r - 1 \geq 2$. Then

\begin{align*}
g_{1,2}g_{2,3} \cdots g_{r-2,r-1} g_{r-1,r}g_{r,1} &= g_{1,2}g_{2,3} \cdots g_{r-2,r-1} (-g_{r-1,1}(g_{r-1,r}+g_{r,1})) \\
&= -(g_{1,2}g_{2,3} \cdots g_{r-2,r-1}g_{r-1,1})(g_{r-1,r}+g_{r,1}) \\
&= 0
\end{align*}

\

by (2) and the induction hypothesis. This finishes the proof.
\end{proof}

\hspace{3mm} The following notation will be convenient:

\begin{itemize}
	\item $x_{ij} := g_{ij}x_{i} = g_{ij}x_{j}$;
	\item $y_{ij} := g_{ij}y_{i} = g_{ij}y_{j}$;
	\item $g_{I} = g_{i_{1}, \dots, i_{r}} := g_{i_{1}, i_{2}} g_{i_{2}, i_{3}} \cdots g_{i_{r-2}, i_{r-1}} g_{i_{r-1}, i_{r}}$;
	\item $x_{I} := x_{i_{1}, \dots, i_{r}} := g_{i_{1}, \dots, i_{r}}x_{i_{1}} = \cdots = g_{i_{1}, \dots, i_{r}}x_{i_{r}}$;
	\item $y_{I} := y_{i_{1}, \dots, i_{r}} := g_{i_{1}, \dots, i_{r}}y_{i_{1}} = \cdots = g_{i_{1}, \dots, i_{r}}y_{i_{r}},$
\end{itemize}

\

where $I = (i_{1}, \dots, i_{r})$ and $i_{1}, \dots, i_{r}$ are distinct. 

\

\begin{lem}\label{rel2} Any product of $g_{i,j}$ can be written as a linear combination of elements in $E_{2}(X, n)$ of the following form:

$$g_{I_1} g_{I_2} \cdots g_{I_r}$$

\

where $I_1,\dots,I_r$ are disjoint ordered tuples.
\end{lem}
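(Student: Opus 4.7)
Given any monomial $M = g_{i_1, j_1}\cdots g_{i_a, j_a}$, my plan is to associate the multigraph $G(M)$ on the vertex set $\{1, \ldots, n\}$ whose edges are $\{i_k, j_k\}$ (with multiplicity), and then reduce $M$ to the target form by induction on a complexity measure of $G(M)$. If any edge is repeated, then $M = 0$ by graded-commutativity, since each $g_{ij}$ has odd total degree $1$; so I may assume $G(M)$ is simple. The three available tools are graded-commutativity (to reorder factors), relation $(1)$ (to reorient each $g_{ij}$), and relation $(2)$, which I will use for ``edge surgery'' at a vertex of high degree.

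The base case is when every vertex of $G(M)$ has degree $\leq 2$. Then $G(M)$ decomposes as a disjoint union of simple paths and simple cycles. Any cycle component, after being rearranged with graded-commutativity and relation $(1)$ into the cyclic form of Lemma \ref{rel1}, makes $M$ vanish. The remaining path components then immediately exhibit $M$ (up to sign) in the desired form $g_{I_1}\cdots g_{I_r}$ with disjoint ordered tuples $I_k$.

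In the inductive step, I suppose some vertex $v$ of $G(M)$ has degree $\geq 3$. I pick two factors $g_{uv}$ and $g_{wv}$ incident to $v$, use graded-commutativity to bring them adjacent in $M$ (with an appropriate sign), and apply relation $(2)$ in the form $g_{uv} g_{wv} = -g_{uw}g_{uv} - g_{uw}g_{wv}$. Each of the two resulting monomials has the same number of factors as $M$, with the degree at $v$ decreased by $1$, and with only one of $u$ or $w$ having its degree raised by $1$. Iterating this rewriting should eventually push every monomial into the base case.

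The main obstacle is the termination of the inductive step: a priori, relation $(2)$ merely redistributes excess degree from $v$ to its neighbors, so without care the rewriting could cycle (as one sees by applying the relation to the two terms of a $K_{1,3}$ centered at different vertices in succession). I plan to address this by equipping the set of monomials with a well-founded lexicographic order on an invariant of $G(M)$ --- for instance on the decreasing sorted sequence of vertex degrees, or on the pair (largest-indexed vertex of degree $\geq 3$, its degree) after fixing a total order on $\{1, \ldots, n\}$ --- and then arguing that, by choosing the pair $(u, w)$ judiciously (and, if necessary, applying relation $(2)$ several times in succession before attributing the new monomials to the induction), each intermediate monomial is strictly smaller. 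Identifying the right measure and checking that it actually decreases is, I expect, the only substantive point; the rest of the argument is routine rewriting.
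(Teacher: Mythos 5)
Your architecture is the same as the paper's: encode the monomial by the graph of its indices, kill cycles with Lemma \ref{rel1}, and use relation (2) as the edge surgery $g_{uv}g_{wv}=-g_{uw}g_{uv}-g_{uw}g_{wv}$ to eliminate vertices of degree at least $3$. The bookkeeping you do carry out (vanishing of repeated edges, the degree changes under surgery, the identification of the base case with disjoint paths and cycles) is correct. But the step you defer, namely exhibiting a well-founded measure that strictly decreases, is, as you say yourself, the only substantive point of the lemma, and neither measure you propose works. The sorted degree sequence can be left \emph{unchanged} by the surgery no matter which pair of edges at the offending vertex is chosen: take the spider with center $v$ and three legs $v-c_i-\ell_i$ of length two, with degree sequence $(3,2,2,2,1,1,1)$; surgery at $v$ on the edges $vc_1,vc_2$ produces, in the term containing $g_{c_1c_2}g_{c_1v}$, a tree whose degree sequence is again $(3,2,2,2,1,1,1)$, the degree-$3$ vertex having merely migrated from $v$ to $c_1$. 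Your second candidate has the symmetric defect: the surgery can promote a low-degree vertex of larger index to degree $3$. So as written the proposal does not contain a proof, and the failure mode is exactly the cycling you worried about.

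The gap is fixable, and the paper's ordering of the steps helps: first reduce to forests via Lemma \ref{rel1}; since relation (2) preserves both the number of $g$-factors and the set of vertices they involve, the surgery carries trees to trees, so termination only needs to be proved on trees (in your ordering, the surgery can create cycles, which costs you extra cases). One complete argument: induct on the number of edges of a tree $T$. Peel off a leaf $\ell$ with neighbor $v$; by induction $g_{T\setminus\ell}$ is a signed sum of terms $g_P$ with $P$ a path through all remaining vertices (a disjoint union of paths on $m$ vertices with $m-1$ edges is a single path). Each term $g_{\ell v}g_P$ is a path with one pendant edge at $v$; now sub-induct on the distance from $v$ to the nearer end of $P$: with $a$ the neighbor of $v$ on the shorter side, the surgery $g_{\ell v}g_{av}=-g_{\ell a}g_{\ell v}-g_{\ell a}g_{av}$ yields one term that is already a path (insert $\ell$ between $a$ and $v$) and one term in which the pendant has moved one step closer to the end. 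The paper's suggestion (root each tree and induct on its depth) is another instance of the same idea: the right measure records how far the branching sits from the boundary of the tree, not the degree statistics alone.
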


\begin{proof} Given a product of $g_{i,j}$, consider the undirected graph with vertex set $\{1,\dots,n\}$ with $i,j$ connected by an edge if and only if $g_{i,j}$ appears in the product. We discard the vertices that have no edges connected to them for convenience. By Lemma \ref{rel1}, we may assume that this graph is acylic, so it is a disjoint union of trees. Consider the following special case first: $g_{1,2}g_{2,3}g_{2,4}$. We have

\begin{align*}
g_{1,2}g_{2,3}g_{2,4} &= g_{1,2}(-g_{3,4}(g_{2,3} + g_{2,4})) \\
&= -g_{1,2}g_{3,4}g_{2,3} - g_{1,2}g_{3,4}g_{2,4} \\
&= g_{1,2}g_{2,3}g_{3,4} + g_{1,2}g_{2,4}g_{4,3} \\
&= g_{1,2,3,4}+g_{1,2,4,3}.
\end{align*}

\

For the general case, applying this computation repeatedly to each connected component finishes the proof. (There are several ways to show that the process terminates on a given tree. For instance, one can assign a root and induct on the depth of the tree, namely, the length of the longest path from the root.) 
\end{proof}

\

By Lemma \ref{rel2}, we may write any typical element of $E_{2}(X, n)$ as a linear combinations of the elements of the form

$$g_{I_{1}} \cdots g_{I_{a}} x_{J_{1}} \cdots x_{J_{b}} y_{K_{1}} \cdots y_{K_{c}},$$

\

where $I_{1}, \dots, I_{a}, J_{1}, \dots, J_{b}, K_{1}, \dots, K_{c}$ are \emph{disjoint} ordered sets of distinct integers. Since $E_{2}(X, n)^{S_{n}} = e_{S_{n}}(E_{2}(X, n))$, where

$$e_{S_{n}} := \frac{1}{|S_{n}|}\sum_{\sg \in S_{n}} \sg$$

\

is the averaging operator, we can list all the generators of $E_{2}(X, n)^{S_{n}}$ by applying $e_{S_{n}}$ to the elements of the above form. We note that

$$d(x_{ij}) = d(g_{ij})x_{i} - g_{ij}d(x_{i}) = d(g_{ij})x_{i} = (y_{i}x_{j} - x_{i}y_{j})x_{i} = 0$$

\

because $x_{i}y_{i} = 0$ and $x_{i}^{2} = 0$. Similarly, we have $d(y_{ij}) = 0$.

\

\begin{lem}\label{rel3} Let 

$$\alpha = g_{I_{1}} \cdots g_{I_{a}} x_{J_{1}} \cdots x_{J_{b}} y_{K_{1}} \cdots y_{K_{c}}$$

\

be an element of $E_{2}(X, n)$, where $I_{1}, \dots, I_{a}, J_{1}, \dots, J_{b}, K_{1}, \dots, K_{c}$ are disjoint ordered sets of distinct integers.

\

\be
	\item If $\alpha$ can be written in an expression that includes $x_{i}$ and $x_{j}$ for some $1 \leq i \neq j \leq n$, then $e_{S_{n}}(\alpha) = 0$.
	\item If $\alpha$ can be written in an expression that includes $g_{ij}$ and $g_{kl}$ such that $\{i,j\} \cap \{k,l\} = \es$, then $e_{S_{n}}(\alpha) = 0$.
\ee
\end{lem}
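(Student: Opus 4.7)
The plan for both parts is identical: find $\tau \in S_n$ with $\tau \alpha = -\alpha$. Once this is done, the $S_n$-invariance of $e_{S_n}$ gives $e_{S_n}(\alpha) = e_{S_n}(\tau \alpha) = -e_{S_n}(\alpha)$, and since we are in characteristic $0$ this forces $e_{S_n}(\alpha) = 0$.

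For part (1), I would read the hypothesis as saying that, in the given decomposition of $\alpha$, two of the $x$-blocks are singletons $x_{J_l} = x_i$ and $x_{J_m} = x_j$ (equivalently, $J_l = (i)$ and $J_m = (j)$) with $i \neq j$. I take $\tau = (i\ j)$, the transposition. Because the ordered tuples $I_1, \ldots, J_1, \ldots, K_c$ are pairwise disjoint and $\{i\}$ and $\{j\}$ are two of the $J$-blocks, neither $i$ nor $j$ appears in any other block, so $\tau$ fixes each other factor $g_{I_k}$, $x_{J_k}$, $y_{K_k}$ pointwise. Its only effect is to swap $x_i \leftrightarrow x_j$. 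Writing $\alpha = L \cdot x_i \cdot M \cdot x_j \cdot R$ with middle block $M$ of total degree $d$, graded commutativity then gives $\tau \alpha = L \cdot x_j \cdot M \cdot x_i \cdot R = (-1)^d L M x_j x_i R = -(-1)^d L M x_i x_j R = -\alpha$, using $x_j x_i = -x_i x_j$ at the last swap and the same $(-1)^d$ factor that appears when we rewrite $\alpha$ in the form $(-1)^d L M x_i x_j R$.

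For part (2), the analogous reading is that two of the $I$-blocks are size-$2$ tuples $(i, j)$ and $(k, l)$ with $\{i, j\} \cap \{k, l\} = \emptyset$. I would take $\tau = (i\ k)(j\ l)$. Disjointness again ensures $\tau$ fixes every other factor of $\alpha$ pointwise, while on the two distinguished $g$-blocks it acts by $\tau(g_{ij}) = g_{kl}$ and $\tau(g_{kl}) = g_{ij}$. Since $g_{ij}$ and $g_{kl}$ each have total degree $1$, they graded-anticommute in $E_2(X, n)$, and the identical sign-count yields $\tau \alpha = -\alpha$.

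The only real delicacy I anticipate is interpretational: the transposition argument above works cleanly when $x_i, x_j$ (in (1)) or $g_{ij}, g_{kl}$ (in (2)) appear as standalone factors in the decomposition of $\alpha$, and not merely as pieces absorbed into a compound block such as $x_{J_l} = g_{J_l} x_i$ with $|J_l| \geq 2$. I take this as the intended reading of ``an expression that includes \dots''; once this is granted, the only remaining work is the routine graded-sign bookkeeping carried out above, and no further obstacle presents itself.
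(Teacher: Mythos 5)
Your argument is correct and is essentially the paper's own proof: the paper likewise takes $\sigma=(i\ j)$ for (1) and $\sigma=(i\ k)(j\ l)$ for (2), uses graded anticommutativity to see $\sigma(\alpha)=-\alpha$, and concludes from $e_{S_{n}}(\alpha)=e_{S_{n}}(\sigma(\alpha))=-e_{S_{n}}(\alpha)$ over a field of characteristic $0$. Your interpretational caveat --- that the distinguished factors must occur as standalone blocks rather than absorbed into larger ones --- is also the reading the paper implicitly adopts, as that is the only situation in which the lemma is later invoked.
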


\begin{proof} If an expression of $\alpha$ includes $x_{i}$ and $x_{j}$ for some $1 \leq i \neq j \leq n$, then using the graded commtativity finitely many times, we may write $\alpha$ in an expression that includes $x_{i}x_{j}$. Then we consider the transposition $\sg = (i \ j) \in S_{n}$ and see that $e_{S_{n}}(\alpha) = e_{S_{n}}(\sg(\alpha)) = e_{S_{n}}(- \alpha) = -e_{S_{n}}(\alpha)$ so that $e_{S_{n}}(\alpha) = 0$ since we are working over $\bC$, a field of characteristic $0$. This proves the first assertion. The second assertion can be similalry proven by taking $\sg = (i \ k)(j \ l)$ instead.
\end{proof}

\

\begin{lem}\label{rel4} For $r \geq 3$, we have

$$e_{S_{n}}(g_{i_{1}, \dots, i_{r}}) = 0.$$
\end{lem}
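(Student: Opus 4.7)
The plan is to split the proof into two cases based on the length $r$ of the index tuple.

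For $r \geq 4$, I would appeal directly to Lemma~\ref{rel3}(2). The key observation is that the expansion $g_{i_{1},i_{2},\ldots,i_{r}} = g_{i_{1},i_{2}} g_{i_{2},i_{3}} \cdots g_{i_{r-1},i_{r}}$ contains the two factors $g_{i_{1},i_{2}}$ and $g_{i_{3},i_{4}}$, whose index sets $\{i_{1},i_{2}\}$ and $\{i_{3},i_{4}\}$ are disjoint because $i_{1},\ldots,i_{r}$ are by hypothesis distinct. Lemma~\ref{rel3}(2) then kills the $S_{n}$-average of $g_{i_{1},\ldots,i_{r}}$ immediately.

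For $r = 3$, Lemma~\ref{rel3}(2) no longer applies, since the only two factors $g_{i_{1},i_{2}}$ and $g_{i_{2},i_{3}}$ share the middle index $i_{2}$. Instead, the plan is to find an involution $\tau \in S_{n}$ that sends $g_{i_{1},i_{2},i_{3}}$ to its own negative. The transposition $\tau = (i_{1}\ i_{3})$ should work: using $g_{ij} = g_{ji}$ together with graded-commutativity (each $g_{ij}$ has total degree $1$, so any two such factors anti-commute), a short calculation gives
\[
\tau \cdot g_{i_{1},i_{2},i_{3}} \;=\; g_{i_{3},i_{2}}\,g_{i_{2},i_{1}} \;=\; g_{i_{2},i_{3}}\,g_{i_{1},i_{2}} \;=\; -\,g_{i_{1},i_{2}}\,g_{i_{2},i_{3}} \;=\; -\,g_{i_{1},i_{2},i_{3}}.
\]
Applying $e_{S_{n}}$ and using its $S_{n}$-invariance then forces $e_{S_{n}}(g_{i_{1},i_{2},i_{3}}) = 0$, since we are working in characteristic zero.

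I do not anticipate a serious obstacle here, as each case reduces to essentially a one-line application once the right tool is identified. The only thing worth flagging is the necessity of separating $r = 3$ from the general case: a path of length two (i.e., $r = 3$) is the unique length for which no two edges of the path are vertex-disjoint, so Lemma~\ref{rel3}(2) cannot be invoked, and one is forced to fall back on an explicit sign-producing involution together with graded-commutativity.
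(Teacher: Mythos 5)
Your $r=3$ case is correct: the transposition $(i_{1}\ i_{3})$ fixes $i_{2}$, interchanges the two odd-degree factors $g_{i_{1}i_{2}}$ and $g_{i_{2}i_{3}}$, and so produces the sign $-1$; this is a perfectly good elementary argument. The $r\geq 4$ case, however, has a genuine gap. Lemma~\ref{rel3}(2) cannot be invoked for $g_{i_{1},\dots,i_{r}}=g_{i_{1}i_{2}}g_{i_{2}i_{3}}\cdots g_{i_{r-1}i_{r}}$ in the way you propose. The proof of Lemma~\ref{rel3}(2) consists of acting by $\sigma=(i\ k)(j\ l)$ and observing that $\sigma(\alpha)=-\alpha$; this requires every factor of $\alpha$ other than $g_{ij}$ and $g_{kl}$ to be fixed by $\sigma$, i.e.\ the four letters $i,j,k,l$ must occur nowhere else in the monomial. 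In your application the connecting factor $g_{i_{2}i_{3}}$ involves letters moved by $\sigma=(i_{1}\ i_{3})(i_{2}\ i_{4})$, so $\sigma(g_{i_{1},\dots,i_{r}})$ is the monomial of the relabeled path $i_{3}\!-\!i_{4}\!-\!i_{1}\!-\!i_{2}\!-\!\cdots$, not $-g_{i_{1},\dots,i_{r}}$. Indeed, the completely literal reading of Lemma~\ref{rel3}(2) that you are using would equally force $e_{S_{n}}(x_{j}y_{k}x_{J_{1}}x_{J_{2}})=0$, since $x_{J_{1}}$ and $x_{J_{2}}$ contain the disjoint factors $g_{J_{1}}$ and $g_{J_{2}}$; that contradicts Corollary~\ref{rel7}, so that reading is not available.

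The gap is not cosmetic, and no single sign-producing involution will close it in general. For $r=4$ the statement can still be rescued by the path reversal $(i_{1}\ i_{4})(i_{2}\ i_{3})$, which reverses a product of three odd factors and hence contributes $(-1)^{3}=-1$; but for $r=5$ the only permutation preserving the path $i_{1}\!-\!\cdots\!-\!i_{5}$ as a graph is its reversal, which reverses four odd factors and gives $(-1)^{6}=+1$, so every graph symmetry acts by $+1$ on the monomial and one is forced to use the three-term relation $g_{ik}g_{jk}=-g_{ij}(g_{ik}+g_{jk})$ in an essential way. This is exactly why the paper's proof is not a combinatorial sign argument: it maps Arnol'd's algebra $H^{\bullet}(F(\mathbb{A}^{1},n))$ into $E_{2}(X,n)$ via $g_{ij}\mapsto g_{ij}$ and quotes Arnol'd's theorem that $H^{r-1}(\Conf^{n}(\mathbb{A}^{1}))=0$ for $r-1\geq 2$, which is precisely the vanishing of these averaged path classes. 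To repair your argument you would need either to reprove that vanishing (say by an induction on $r$ using relation (2)) or to cite it as the paper does.
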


\begin{proof} By a result of Cohen (e.g., \cite{Coh}, Section 6), we have a graded isomorphism of graded-commutative $\bC[S_{n}]$-algebras

$$H^{\bl}(F(\bA^{1}, n)) \simeq \bC
\begin{bmatrix}
g_{ij} \text{ for } 1 \leq i \neq j \leq n
\end{bmatrix}/(\text{relations}),$$

\

where the relations are given by

\

\be
	\item $g_{ij} = g_{ji}$ for all $1 \leq i \neq j \leq n$;
	\item $g_{ik}g_{jk} = - g_{ij}(g_{ik} + g_{jk})$ for all distinct $1 \leq i, j, k \leq n$,
\ee

\

and the degree of each $g_{ij}$ is $1$. Cohen's result also says that the $S_{n}$-action induced from the one on $F(\bA^{1}, n) \sub \bA^{n}$, by permuting coordinates, is given by $\sg g_{i,j} = g_{\sg(i),\sg(j)}$. We thus have a map 

$$H^{\bl}(F(\bA^{1}, n)) \ra E_{2}(X, n)$$

\

of $\bC[S_{n}]$-algebras given by $g_{ij} \mapsto g_{ij}$. Taking the $S_{n}$-invariant is an exact functor, so we get

$$H^{\bl}(\Conf^{n}(\bA^{1})) \simeq H^{\bl}(F(\bA^{1}, n))^{S_{n}} \ra E_{2}(X, n)^{S_{n}},$$

\

where the isomorphism is given because $F(\bA^{1}, n) \ra \Conf^{n}(\bA^{1})$ is a finite covering space. Consider the element $e_{S_{n}}(g_{i_{1}, \dots, i_{r}})$ in $H^{r}(\Conf^{n}(\bA^{1}))$. For $r \geq 3$, Arnol'd \cite{Arn} proved that $H^{r-1}(\Conf^{n}(\bA^{1})) = 0$ for all $n \geq 0$. Thus, we must have $e_{S_{n}}(g_{i_{1}, \dots, i_{r}}) = 0$ in $H^{r}(\Conf^{n}(\bA^{1}))$, as well as in $E_{2}(X, n)^{S_{n}}$.
\end{proof}

\

\begin{lem}\label{rel5} Every element of $E_{2}(X, n)^{S_{n}}$ is a linear combination of elements of the form

$$e_n(g_{i_{1}, i_{2}}^{r}x_j^{s_1}y_k^{s_2}x_{J_{1}} \cdots x_{J_{b}} y_{K_{1}} \cdots y_{K_{c}}) \in E_{2}(X, n),$$

\

where $r,s_1,s_2 \in \{0, 1\}$, the tuples $J_{1}, \dots, J_{b}, K_{1}, \dots, K_{c}$ all have size 2, and all the lower indices that \emph{appear} are disjoint. (For example, if $s_1=0$, then the index $j$ does not appear.)
\end{lem}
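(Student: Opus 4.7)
The plan is to start from a monomial of the shape produced by Lemma \ref{rel2}, namely $\alpha = g_{I_{1}}\cdots g_{I_{a}}x_{J_{1}}\cdots x_{J_{b}}y_{K_{1}}\cdots y_{K_{c}}$ with pairwise disjoint index tuples, and to show that $e_{S_{n}}(\alpha)$ either vanishes or equals $e_{S_{n}}$ of a monomial of the listed form. Once this is done, the fact that any $\xi \in E_{2}(X,n)^{S_{n}}$ equals $e_{S_{n}}(\xi)$, expanded over monomials from Lemma \ref{rel2}, finishes the proof: $e_{S_{n}}$ sends a listed form to a linear combination of listed forms with permuted indices, so passing back and forth does not take us outside the span.

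First I would invoke Lemma \ref{rel3} to eliminate the bulk of the cases. Lemma \ref{rel3}(2) kills $\alpha$ as soon as two of the ``chain'' factors $g_{I_{l}}$, $x_{J_{l}}$ with $|J_{l}| \geq 2$, or $y_{K_{l}}$ with $|K_{l}| \geq 2$ are simultaneously present, because each such chain contains a $g_{i,j}$ and the disjointness of the tuples forces these $g_{i,j}$'s to have disjoint indices. Lemma \ref{rel3}(1) rules out two singleton $x_{j}$'s with different indices, two singleton $y_{k}$'s, and also a singleton $x_{j}$ together with any $x_{J_{l}}$ of size $\geq 2$ (since $x_{J_{l}} = g_{J_{l}}x_{j'}$ still exhibits a bare variable $x_{j'}$), and symmetrically for $y$.

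The remaining task, which is the technical heart of the plan, is to show that a single long chain also vanishes after $e_{S_{n}}$. For $|I_{1}| \geq 3$ I would write $e_{S_{n}} = e_{S_{n}} \circ e_{S_{I_{1}}}$, note that $S_{I_{1}}$ fixes every factor of $\alpha$ outside $g_{I_{1}}$, and factor out: $e_{S_{I_{1}}}(\alpha) = (\text{rest}) \cdot e_{S_{I_{1}}}(g_{I_{1}})$. The element $e_{S_{I_{1}}}(g_{I_{1}})$ is $0$ by the very argument of Lemma \ref{rel4} applied to the sub-configuration on the indices $I_{1}$, combining Cohen's presentation restricted to $I_{1}$ with Arnol'd's vanishing $H^{r-1}(\Conf^{r}(\bA^{1})) = 0$ for $r \geq 3$. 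The case of $x_{J_{1}}$ with $|J_{1}| = r \geq 3$ needs an extra trick: the relations $g_{ij}x_{i} = g_{ij}x_{j}$ give $r \cdot x_{J_{1}} = g_{J_{1}}(x_{j_{1}} + \cdots + x_{j_{r}})$, and the sum on the right is $S_{J_{1}}$-invariant, so $e_{S_{J_{1}}}$ factors it out to leave $e_{S_{J_{1}}}(g_{J_{1}}) \cdot (x_{j_{1}} + \cdots + x_{j_{r}}) = 0$ by the same local application of Lemma \ref{rel4}. The identical trick handles $y_{K_{1}}$ of size $\geq 3$.

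After these reductions, the surviving monomials fall into exactly four shapes: $x_{j}^{s_{1}}y_{k}^{s_{2}}$, $g_{i_{1},i_{2}}x_{j}^{s_{1}}y_{k}^{s_{2}}$, $x_{J_{1}}y_{k}^{s_{2}}$ with $|J_{1}| = 2$, and $y_{K_{1}}x_{j}^{s_{1}}$ with $|K_{1}| = 2$, each with $s_{1},s_{2} \in \{0,1\}$. All four shapes are manifestly of the form listed in Lemma \ref{rel5} (choosing $r,b,c$ with $b+c \leq 1$), completing the plan. The main obstacle is the long-chain vanishing step: the $I_{1}$-case is essentially immediate once one sees that the local symmetrizer commutes past factors with disjoint indices, but the $J_{1}$ and $K_{1}$ cases require the preliminary move of promoting $x_{J_{1}}$ to the symmetric expression $g_{J_{1}}(x_{j_{1}} + \cdots + x_{j_{r}})$ before Lemma \ref{rel4} can be brought to bear.
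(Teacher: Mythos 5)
There is a genuine gap, and it lies in your use of Lemma \ref{rel3}. The cancellation argument behind that lemma (pair $\sg$ with $\sg\tau$ for a transposition or double transposition $\tau$) only works when $\tau$ negates the two designated factors \emph{and fixes every other factor of the monomial}. This is the case for two bare singletons $x_{i},x_{j}$, and for two bare edges $g_{i_{1}i_{2}},g_{i_{3}i_{4}}$, but it fails as soon as one of the $g_{ij}$'s or $x_{i}$'s you invoke sits inside a decorated chain $x_{J}=g_{J}x_{j}$ or $y_{K}=g_{K}y_{k}$: the permutation then also moves the accompanying factor ($g_{J}$, resp.\ the decorating variable), the monomial is carried to a \emph{different} monomial rather than to its own negative, and no cancellation occurs. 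Concretely, your claim that two chain factors cannot coexist would force $e_{S_{n}}(x_{1}y_{2}x_{34}y_{56})=0$, contradicting Corollary \ref{rel7}; likewise $e_{S_{n}}(g_{12}x_{34})$ cannot vanish, since its differential is shown to be nonzero in the proof of Theorem \ref{main2}, and $e_{S_{n}}(x_{1}x_{23})$ is seen to be nonzero by applying the map $\psi$ from the proof of Lemma \ref{rel6}. (Part of the blame is the paper's: Lemma \ref{rel3} is worded more broadly than its proof supports, and the paper itself only ever applies it in the narrow ``bare'' cases.)

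As a consequence, your final list of four shapes is strictly smaller than the family of forms in Lemma \ref{rel5} --- which deliberately allows arbitrarily many size-$2$ factors $x_{J_{1}}\cdots x_{J_{b}}y_{K_{1}}\cdots y_{K_{c}}$ alongside at most one $g_{i_{1}i_{2}}$, one bare $x_{j}$ and one bare $y_{k}$ --- and it is false as a spanning claim: the class $e_{S_{n}}(x_{1}y_{2}x_{34})$ is nonzero by Corollary \ref{rel7} and has total degree $5$, whereas none of your four shapes reaches total degree $5$. The correct reduction keeps all the size-$2$ chains and uses Lemma \ref{rel3} only to bound by one the number of bare $g$-edges, of bare $x$'s, and of bare $y$'s. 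Your handling of chains of length $\geq 3$ via the local symmetrizers $e_{S_{I}}$, together with the identity $r\,x_{J}=g_{J}(x_{j_{1}}+\cdots+x_{j_{r}})$, is correct and is a nice way of making precise the paper's terse appeal to Lemma \ref{rel4}; but the remainder of the reduction needs to be redone.
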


\

\begin{proof} Since $E_{2}(X, n)^{S_{n}} = e_{S_{n}}(E_{2}(X, n)) \sub E_{2}(X, n)$, by Lemma \ref{rel3}, any element of it can be written as a linear combination of elements of the form $e_{S_{n}}(\alpha)$ such that

$$\alpha = g_{I_{1}} \cdots g_{I_{a}} x_{J_{1}} \cdots x_{J_{b}} y_{K_{1}} \cdots y_{K_{c}},$$

\

where $I_{1}, \dots, I_{a}, J_{1}, \dots, J_{b}, K_{1}, \dots, K_{c}$ are disjoint ordered sets of distinct integers. If any $I_{t}$ has size at least $3$, then $e_{S_{n}}(\alpha) = 0$ because of mutual disjointness of the index sets and Lemma \ref{rel4}. Thus, if there are any $g_{I_{t}}$ in the expression, we may assume that $|I_{t}| = 2$.

\

\hspace{3mm} If $\sg$ is any permutation on some restricted letters, say $1, 2, \dots, r$, then

$$\sg x_{1, 2, \dots, r} = \sg (g_{1, 2,\dots, r}x_{1}) =g_{\sg(1), \sg(2), \dots, \sg(r)}x_{\sg(1)} = x_{\sg(1), \sg(2), \dots, \sg(r)}.$$

\

Thus, we may apply a similar argument as before to be able to assume that

$$|J_{1}|, \dots, |J_{b}|, |K_{1}|, \dots, |K_{c}| \leq 2$$

\

in order for $e_{S_{n}}(\alpha)$ to be nonzero.

\

\hspace{3mm} Finally, by Lemma \ref{rel3}, we have that $e_{S_n}(\alpha)$ is zero unless $a\leq 1$, there is at most one $J_t$ with $|J_t|=1$, and there is at most one $K_t$ with $|K_t|=1$. This finishes the proof.
\end{proof}

\

\begin{rmk} Assume $|I|, |J|, |K| = 2$. In our notation for $g_{I}, x_{J},$ and $y_{K}$, even though the definitions require $I, J,$ and $K$ to be ordered sets of integers, the orders do not matter due to some of the relations we described at the beginning of Section \ref{setup}. Hence, from now on, we may consider $I, J,$ and $K$ as unordered sets of size 2 without ambiguity.
\end{rmk}

\

We will also need the following technical results in the proof of Theorem \ref{main2}.

\begin{lem}\label{rel6} Let $n, b, c \in \bZ_{\geq 0}$ such that $n \geq 2 + b + c$. The elements of the form

$$x_{j}y_{k} x_{J_{1}} \cdots x_{J_{b}} y_{K_{1}} \cdots y_{K_{c}},$$

\

where $\{j, k\}, J_{1}, \dots, J_{b}, K_{1}, \dots, K_{c}$ are disjoint subsets of $\{1, 2, \dots, n\}$ and $|J_{t}| = |K_{u}| = 2$ for $1 \leq t \leq b$ and $1 \leq u \leq c$, with the increasing lexicographic order among the two-element set indices,\footnote{For example, we have $\{1, 2\} < \{3, 6\} < \{4,5\}$.} are linearly independent in $E_{2}(X, n)$.
\end{lem}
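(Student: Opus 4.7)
My plan is to exhibit the listed elements as distinct members of an explicit basis of $E_2(X, n)$, which I call the \emph{decorated forest basis}. The basis is indexed by pairs $(F, d)$, where $F$ is a forest on $\{1, \ldots, n\}$ written in Cohen's no-broken-circuit basis for the Arnold algebra $\mc{A}(n) := \bC\langle g_{ij}\rangle / (\text{rel.\ (1), (2), graded-commutativity})$ (cf. \cite{Coh}), and $d$ assigns each tree $T \in \pi(F)$ (singletons included) a decoration in $\{1, x, y\}$, where $\pi(F)$ denotes the set of trees of $F$. The corresponding monomial is $g_F \cdot \prod_T u_T$ with $u_T \in \{1, x_{i_T}, y_{i_T}\}$ for a chosen representative $i_T \in T$.

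To establish this basis, I would first show spanning: apply Lemma \ref{rel2} to reduce any $g$-product to a sum of chain products over disjoint vertex sets, then use relations (1), (2) to rewrite in Cohen's basis. The $x, y$ part then reduces via relations (3), (4), (5) and graded-commutativity, since within each tree $T$ of $F$ the identity $g_F x_i = g_F x_j$ combined with $x_i^2 = x_iy_i = 0$ forces any product of two or more elements of $\{x_i, y_i : i \in T\}$ multiplying $g_F$ to vanish, leaving at most one decoration per tree. For linear independence, the key claim is that the surjection $H^\bullet(X^n) \otimes \mc{A}(n) \tra E_2(X, n)$ has kernel $\bigoplus_F (I_F \otimes g_F)$, where $I_F := (x_i - x_j, y_i - y_j : i, j \in T, \ T \in \pi(F))$. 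This decomposition, which I expect to be the main obstacle, hinges on the fact that the Arnold relation (2) preserves the connected components of the underlying graph, so that rewriting any $g_{F'} g_{ij}$ in Cohen's basis yields only terms $g_F$ with $i, j$ in a common tree of $F$. Granting this, $E_2(X, n) \cong \bigoplus_F g_F \cdot H^\bullet(X)^{\otimes |\pi(F)|}$ as vector spaces, confirming that the decorated-forest monomials form a basis.

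Given this basis, the lemma is immediate. Each element $\alpha = x_j y_k x_{J_1} \cdots x_{J_b} y_{K_1} \cdots y_{K_c}$ corresponds uniquely to the decorated forest with edge set $\{J_1, \ldots, J_b, K_1, \ldots, K_c\}$ (a matching, hence automatically in Cohen's basis), with singletons $\{j\}, \{k\}$ labeled $x, y$ respectively, each $J_t$ (resp.\ $K_u$) labeled $x$ (resp.\ $y$), and all remaining vertices undecorated singletons. Distinct input data produce distinct decorated forests, hence distinct basis elements; linear independence of the listed elements follows.
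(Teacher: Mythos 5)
Your proposal is correct in outline but takes a genuinely different route from the paper's. The paper does not determine the full structure of $E_{2}(X,n)$: it passes to the quotient $W(n) = E_{2}(X,n)/(g_{ij}g_{jk})$, introduces a monomial algebra $V(n)$ with an evident basis, and constructs by hand a $\bC$-linear retraction $\psi$ of the natural map $V(n) \ra W(n)$; this suffices because the monomials in the statement involve only pairwise disjoint edges, so killing all $g_{ij}g_{jk}$ loses nothing. You instead establish the full additive decomposition $E_{2}(X,n) \cong \bigoplus_{F} g_{F} \ot H^{\bullet}(X)^{\ot |\pi(F)|}$ over Cohen's admissible forests --- essentially the decomposition of the Leray $E_{2}$ page over the partition lattice that appears in the abelian-arrangement literature (cf.\ \cite{Bib}) --- and read off the lemma as the statement that the given monomials are distinct basis elements. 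Your identification of the crucial point is right: the Arnol'd relation rewrites $g$-monomials without changing the partition into connected components, so the ideal generated by relation (3) is $\bigoplus_{F} I_{F} \ot g_{F}$; for the reverse inclusion one telescopes $\alpha_{i}-\alpha_{j}$ along the path joining $i$ to $j$ in $F$ to reduce to actual edges of $F$. Two points to make explicit in a write-up: Cohen's basis is indexed by \emph{admissible} forests (each vertex is the larger endpoint of at most one chosen edge), not by all forests --- harmless here, since disjoint two-element sets form a matching, which is automatically admissible --- and the linear independence of these monomials in the Arnol'd algebra is an external input from \cite{Coh}, consistent with the paper's own use of that reference in Lemma \ref{rel4}. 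Your approach proves more than the lemma (a complete basis of $E_{2}(X,n)$, hence of its $S_{n}$-invariants) at the cost of that input; the paper's retraction is longer but entirely self-contained.
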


\

\begin{cor}\label{rel7} With the notation in Lemma \ref{rel6}, we have 

$$e_{S_{n}}(x_j y_k x_{J_{1}} \cdots x_{J_{b}} y_{K_{1}} \cdots y_{K_{c}}) \neq 0$$

\

in $E_{2}(X, n)^{S_{n}}$.
\end{cor}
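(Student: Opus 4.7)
The plan is to combine the linear independence from Lemma \ref{rel6} with a parity observation that makes the averaging behave cleanly. The key point is that each building block of $\alpha := x_{j}y_{k}\,x_{J_{1}}\cdots x_{J_{b}}\,y_{K_{1}}\cdots y_{K_{c}}$ has even total degree in the bigrading on $E_{2}(X,n)$: the pair $x_{j}y_{k}$ sits in bidegree $(2,0)$, each $x_{J_{t}}=g_{J_{t}}x_{j_{1}}$ sits in bidegree $(1,1)$, and each $y_{K_{u}}=g_{K_{u}}y_{k_{1}}$ sits in bidegree $(1,1)$. Every factor thus has total degree $2$, so graded-commutativity lets any two of these factors be swapped \emph{without any sign}.

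With that in hand, I would expand
\[
e_{S_{n}}(\alpha)=\frac{1}{n!}\sum_{\sg\in S_{n}} x_{\sg(j)}y_{\sg(k)}\,x_{\sg(J_{1})}\cdots x_{\sg(J_{b})}\,y_{\sg(K_{1})}\cdots y_{\sg(K_{c})}
\]
and observe that for each $\sg$, the disjointness of $\{j,k\},J_{1},\dots,J_{b},K_{1},\dots,K_{c}$ is preserved, so $\sg(J_{1}),\dots,\sg(K_{c})$ remain disjoint two-element subsets. Using the sign-free commutativity above, I can reorder the $\sg(J_{t})$'s and $\sg(K_{u})$'s to place them in the increasing lexicographic order specified in Lemma \ref{rel6}. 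What results is \emph{exactly one} of the basis-form elements enumerated in that lemma, appearing with coefficient $+1$.

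Therefore $e_{S_{n}}(\alpha)$ is a \emph{nonnegative} rational combination of the elements from Lemma \ref{rel6}: each such basis element is hit $\tfrac{1}{n!}\cdot(\text{number of }\sg\text{ producing it})\geq 0$ times. The identity permutation contributes $\alpha$ itself with coefficient at least $\tfrac{1}{n!}>0$, so the coefficient of $\alpha$ in $e_{S_{n}}(\alpha)$ is strictly positive. By the linear independence asserted in Lemma \ref{rel6}, $e_{S_{n}}(\alpha)\neq 0$ in $E_{2}(X,n)$, and hence in $E_{2}(X,n)^{S_{n}}$.

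There is essentially no obstacle here once the parity check is in place; the only thing to be careful about is that one never needs to rearrange an odd-degree factor past another odd-degree factor, so no signs ever appear in the reordering step. The condition $n\geq 2+b+c$ in the hypothesis of Lemma \ref{rel6} guarantees that $\alpha$ itself is an actual element of $E_{2}(X,n)$ (enough indices are available), which is implicitly needed for the identity contribution to make sense.
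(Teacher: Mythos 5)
Your proposal is correct and takes essentially the same approach as the paper: reduce to showing $\sum_{\sigma\in S_{n}}\sigma(\alpha)\neq 0$, observe that the degree-$2$ factors $x_{J}$ and $y_{K}$ commute without signs so each $\sigma(\alpha)$ is one of the linearly independent elements of Lemma \ref{rel6} with coefficient $+1$, and conclude there is no cancellation. Your write-up just makes the sign-free reordering and the positivity of the coefficients more explicit than the paper does.
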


\begin{proof}[Proof of Corollary \ref{rel7} given Lemma \ref{rel6}] Since we work over a field of characteristic $0$, it is enough to show that

$$\sum_{\sg \in S_{n}} x_{\sg(j)}y_{\sg(k)}x_{\sg(J_{1})} \cdots x_{\sg(J_{b})}y_{\sg(K_{1})} \cdots y_{\sg(K_{c})} \neq 0.$$

\

This follows from Lemma \ref{rel6} because any elements of the form $x_{J}$ or $y_{K}$ with with two-element indices $J, K$ commute.
\end{proof}

\

\hspace{3mm} We now prove Lemma \ref{rel6}:

\begin{proof}[Proof of Lemma \ref{rel6}] Consider the following graded-commutative $\bC$-algebra 
\[W(n) := E_2(X,n)/(g_{ij}g_{jk}\text{ for }i,j,k\text{ distinct}).\]

Using the discussion in Section \ref{setup}, we see that the algebra $W(n)$ has the following presentation:

$$W(n) = \bC
\begin{bmatrix}
x_{1}, \dots, x_{n}, \\
y_{1}, \dots, y_{n}, \\
g_{ij}\text{ for unordered }i,j\text{ with } 1\leq i\neq j\leq n
\end{bmatrix}/(\text{relations})$$

\

with $\deg(x_{i}) = \deg(y_{i}) = \deg(g_{ij}) = 1$, and $g_{ij}$ and $g_{ji}$ are treated as the same generator\footnote{We are writing $x_{ij} := x_{\{i,j\}}$ to mean the formal variable corresponding to each two-element subset $\{i,j\} \sub \{1, 2, \dots, n\}$.} and the relations are given by

\

\be
	\item $g_{ij}g_{jk} = 0$ for all distinct $1 \leq i, j, k \leq n$;
	\item $g_{ij}x_{i} = g_{ij}x_{j}$ for all $1 \leq i \neq j \leq n$;
	\item $g_{ij}y_{i} = g_{ij}y_{j}$ for all $1 \leq i \neq j \leq n$;
	\item $x_{i}y_{i} = 0$ for all $1 \leq i \leq n$.
\ee

\

To establish a linear independence in $E_2(X,n)$, it is sufficient to check the linear independence in $W(n)$. The strategy is to compare $W(n)$ with the following algebra for which we know a basis.

\

\hspace{3mm} Consider the graded-commutative $\bC$-algebra

$$V(n) := \bC
\begin{bmatrix}
x_{1}, \dots, x_{n}, \\
y_{1}, \dots, y_{n}, \\
x_{ij}, y_{ij} \text{ for unordered } 1 \leq i \neq j \leq n
\end{bmatrix}/(\text{relations})$$

\

with $\deg(x_{i}) = \deg(y_{i}) = 1$ and $\deg(x_{ij}) = \deg(y_{ij}) = 2$, where the relations are generated by any monomial with repeated indices (e.g., $x_{1}y_{12}$ or $x_{1}y_{2}x_{34}x_{56}y_{57}$). We have a map $\phi : V(n) \ra W(n)$ of graded-commutative $\bC$-algebras given by $x_{i} \mapsto x_{i}$, $y_{i} \mapsto y_{i}$, $x_{ij} \mapsto g_{ij}x_{i}$ (which is well-defined because $g_{ij}x_i=g_{ij}x_j$ in $E_2(X,n)$ as well as in $W(n)$), and $y_{ij} \mapsto g_{ij}y_{i}$. Because the relations defining $V(n)$ are generated by monomials, it can be easily checked that a $\bC$-vector space basis for $V(n)$ can be given by all monomials with disjoint indices (up to rearrangement), so to finish our proof, it is enough to show that the map $\phi : V(n) \ra W(n)$ is injective. We will achieve this by constructing a left inverse $\psi : W(n) \ra V(n)$ as $\bC$-vector spaces, although this map will not be a map of $\bC$-algebras.

\

\hspace{3mm} Consider

$$\widetilde{W}(n) := \bC
\begin{bmatrix}
x_{1}, \dots, x_{n}, \\
y_{1}, \dots, y_{n}, \\
g_{ij} \text{ for unordered } 1 \leq i \neq j \leq n
\end{bmatrix},$$

\

a free graded-commutative $\bC$-algebra with $\deg(x_{i}) = \deg(y_{i}) = \deg(g_{ij}) = 1$. As a $\bC$-vector space, a basis of $\widetilde{W}(n)$ can be given by all monomials

$$g_{I_{1}} \cdots g_{I_{a}} x_{j_{1}} \cdots x_{j_{b}} y_{k_{1}} \cdots y_{j_{c}},$$

\

such that 

\begin{itemize}
	\item $I_1,\dots,I_a$ are in the increasing lexicographic order, 
	\item $j_1,\dots,j_b$ are increasing, and 
	\item $k_1,\dots,k_c$ are increasing.
\end{itemize}

\

We define $\widetilde{\psi} : \widetilde{W}(n) \ra V(n)$ by mapping each of such monomials to an element in $V(n)$ according to the following rules, which we mark as (A), (B), and (C):

\

\textbf{(A)} The map $\widetilde{\psi}$ sends the monomial to $0$ if it contains any of the factors of the following forms (possibly after a rearrangement): $g_{ij}g_{jk}, x_{i}y_{i}, g_{ij}x_{i}x_{j}, g_{ij}y_{i}y_{j}, g_{ij}x_{i}y_{j}$. We say such monomials are of \textbf{type A}.

\

\hspace{3mm} Any nonzero monomial that does not fall into any of the above cases satisfies the following: if $g_{ij}$ appears as a factor, then at most one of $x_i, x_j, y_i, y_j$ can appear; if $x_i$ or $y_i$ appears, then at most one $g_I$ such that $i \in I$ can appear. As a result, such a monomial can be uniquely written in the following form (possibly after a rearrangement and a change of sign):

\begin{equation}\label{gen}
(g_{I_1}\cdots g_{I_a}) (x_{j_1}\cdots x_{j_b}) (y_{k_1}\cdots y_{k_c}) [(g_{L_1}x_{l_1})\cdots (g_{L_d}x_{l_d})] [(g_{M_1}y_{m_1})\cdots (g_{M_e}y_{m_e})], 
\end{equation}

\

where $I_1,\cdots, I_a, j_1,\cdots, j_b, k_1,\cdots, k_c, L_1,\cdots, L_d, M_1,\cdots, M_e$ are disjoint while

\begin{itemize}
	\item $I_1,\dots,I_a$ are in the increasing lexicographic order,
	\item $j_1,\dots,j_b$ are increasing, 
	\item $k_1,\dots,k_c$ are increasing,
	\item $L_1,\dots,L_d$ are in the increasing lexicographic order, and
	\item $M_1,\dots,M_e$ are in the increasing lexicographic order,
\end{itemize}

with $l_s\in L_s$ and $m_t\in M_t$ for $1\leq s\leq d, 1\leq t\leq e$. 

\

\textbf{(B)} If $a>0$, then $\widetilde{\psi}$ sends the above monomial to 0. Such a monomial is said to be of \textbf{type B}.

\

\textbf{(C)} If $a=0$, then $\widetilde{\psi}$ sends the above monomial to 
\[(x_{j_1}\cdots x_{j_b}) (y_{k_1}\cdots y_{k_c}) (x_{L_1}\cdots x_{L_d}) (y_{M_1}\cdots y_{M_e})\in V(n).\]

Such a monomial is said to be of \textbf{type C}. This finishes the construction of $\widetilde{\psi} : \widetilde{W}(n) \ra V(n)$.

\
 
\hspace{3mm} We claim that $\widetilde \psi$ factors through a $\bC$-linear map $\psi : W(n) \ra V(n)$. That is, we want to show that the elements of the following forms go to $0$ under $\widetilde{\psi}$:

\

\be
	\item $g_{ij}g_{jk}M$;
	\item $g_{ij}(x_{i} - x_{j})M$;
	\item $g_{ij}(y_{i} - y_{j})M$;
	\item $x_{i}y_{i}M$,
\ee

\

where $M=M(\bs{x}, \bs{y}, \bs{g})$ is a monomial. It is immediate from the rule (A) of the definition of $\widetilde{\psi}$ that 

$$\widetilde{\psi}(g_{ij}g_{jk}M) = 0 = \widetilde{\psi}(x_{i}y_{i}M).$$

\

To show $\widetilde{\psi}(g_{ij}(x_{i} - x_{j})M) = 0$, we may show that 

\begin{equation}\label{claim}
\widetilde{\psi}(g_{ij}x_{i}M) = \widetilde{\psi}(g_{ij}x_{j}M)
\end{equation}

\

If $M$ contains any one of $x_{i}, x_{j}, y_i, y_j$, or $g_I$ with $I\cap \{i,j\}\neq \varnothing$, then both sides are $0$ according to the rule (A), which gives the equality. Thus, we may assume $M$ involves only indices other than $i$ and $j$. Since $g_{ij}x_{i}M$ and $g_{ij}x_{j}M$ are both monomials, if $M$ is a monomial of type A,  then both sides of \eqref{claim} are $0$ according to the rule (A). This will finish our task, so suppose that this is not the case. That is, the monomial $M$ can be written in the form of \eqref{gen} (possibly after a rearrangement and a change of sign):

\[M=(g_{I_1}\cdots g_{I_a}) (x_{j_1}\cdots x_{j_b}) (y_{k_1}\cdots y_{k_c}) [(g_{L_1}x_{l_1})\cdots (g_{L_d}x_{l_d})] [(g_{M_1}y_{m_1})\cdots (g_{M_e}y_{m_e})],\]
where the indices are disjoint from $\{i,j\}$. It follows that both $g_{ij}x_{i}M$ and $g_{ij}x_{j}M$ can be written in the form of \eqref{gen} as
\begin{align*}
g_{ij}x_{i}M&=(g_{I_1}\cdots g_{I_a}) (x_{j_1}\cdots x_{j_b}) (y_{k_1}\cdots y_{k_c}) [(g_{L_1}x_{l_1})\cdots (g_{L_{d^*}}x_{l_{d^*}})(g_{ij}x_i) (g_{L_{d^*+1}}x_{l_{d^*+1}}) \cdots (g_{L_d}x_{l_d})]\\ &[(g_{M_1}y_{m_1})\cdots (g_{M_e}y_{m_e})],
\end{align*}

and
\begin{align*}
g_{ij}x_{j}M&=(g_{I_1}\cdots g_{I_a}) (x_{j_1}\cdots x_{j_b}) (y_{k_1}\cdots y_{k_c}) [(g_{L_1}x_{l_1})\cdots (g_{L_{d^*}}x_{l_{d^*}})(g_{ij}x_j) (g_{L_{d^*+1}}x_{l_{d^*+1}}) \cdots (g_{L_d}x_{l_d})]\\ &[(g_{M_1}y_{m_1})\cdots (g_{M_e}y_{m_e})],
\end{align*}
where $d^*$ is such that $L_1,\cdots,L_{d^*},\{i,j\},L_{d^*+1},\cdots,L_d$ are in the increasing lexicographic order. Note that $g_{ij}x_i$ and $g_{ij}x_j$ have degree 2, so they commute with every element. 

\

\hspace{3mm} If $a>0$, then both sides of \eqref{claim} are 0 according to the rule (B). If $a=0$, then $\widetilde{\psi}$ sends both $g_{ij}x_i M$ and $g_{ij}x_j M$ to
\[(x_{j_1}\cdots x_{j_b}) (y_{k_1}\cdots y_{k_c}) (x_{L_1}\cdots x_{L_{d^*}} x_{ij} x_{L_{d^*+1}} x_{L_d}) (y_{M_1}\cdots y_{M_e})\in V(n)\]
according to the rule (C), so both sides of \eqref{claim} are again equal. This shows $\widetilde{\psi}(g_{ij}(x_{i} - x_{j})M) = 0$, and a similar argument shows $\widetilde{\psi}(g_{ij}(y_{i} - y_{j})M) = 0$. Therefore, we get a map $\psi : W(n) \ra V(n)$, and it is left to show that $\psi\circ \phi$ is the identity map on $V(n)$. It suffices to show that $\psi(\phi(M))=M$ for any monomial $M$ in $V(n)$, but this is immediate.
\end{proof}

\

\subsection{Proof of Theorem \ref{main2}} Due to Lemma \ref{rel5}, we now know that elements of $E_{2}(X, n)^{S_{n}}$ can be written as linear combinations of elements of the form $e_{S_{n}}(\alpha)$, where

$$\alpha = g_{i_{1}, i_{2}}^{r}x_{j}^{s_{1}}y_{k}^{s_{2}}x_{j_{1,1},j_{1,2}} \cdots x_{j_{t_{1},1},j_{t_{1},2}} y_{k_{1,1},k_{1,2}} \cdots y_{k_{t_{2},1},k_{t_{2},2}}  \in E_{2}(X, n),$$

\

with $r, s_{1}, s_{2} \in \{0, 1\}$, and all the numbers appearing as indices are distinct. We note that $\alpha \in E_{2}^{p,q}(X, n)$, where

\begin{align*}
(p, q) &= (0,r) + (s_{1} + s_{2},0) + (2(t_{1} + t_{2}), 2(t_{1} + t_{2})) \\
&= (s_{1} + s_{2} + 2(t_{1} + t_{2}), r + 2(t_{1} + t_{2})),
\end{align*}

\

so $p - q = s_{1} + s_{2} - r$.

\

\begin{proof}[Proof of Theorem \ref{main2}] We continue the discussion above. Our goal is to show that $E_{3}^{p,q}(X, n)^{S_{n}} = 0$ unless $p - q = s_{1} + s_{2} - r \in \{0, 1\}$, where we also recall 

\begin{itemize}
	\item $p = s_{1} + s_{2} + 2(t_{1} + t_{2})$ and
	\item $q = r + 2(t_{1} + t_{2})$.
\end{itemize}

\

Note that

\begin{align*}
d(\alpha) &= d(g_{i_{1}, i_{2}}^{r}x_{j}^{s_{1}}y_{k}^{s_{2}}x_{j_{1,1},j_{1,2}} \cdots x_{j_{t_{1},1},j_{t_{1},2}} y_{k_{1,1},k_{1,2}} \cdots y_{k_{t_{2},1},k_{t_{2},2}}) \\
&= d(g_{i_{1}, i_{2}}^{r})x_{j}^{s_{1}}y_{k}^{s_{2}}x_{j_{1,1},j_{1,2}} \cdots x_{j_{t_{1},1},j_{t_{1},2}} y_{k_{1,1},k_{1,2}} \cdots y_{k_{t_{2},1},k_{t_{2},2}} \\
&= \left\{
	\begin{array}{ll}
	0 & \mbox{if } r=0,  \\
	(y_{i_{1}}x_{i_{2}} - x_{i_{1}}y_{i_{2}})x_{j}^{s_{1}}y_{k}^{s_{2}}x_{j_{1,1},j_{1,2}} \cdots x_{j_{t_{1},1},j_{t_{1},2}} y_{k_{1,1},k_{1,2}} \cdots y_{k_{t_{2},1},k_{t_{2},2}} & \mbox{if } r=1.
	\end{array}\right.
\end{align*}

\

%%%%%%%%%%%% START FROM HERE

This implies that

\begin{align*}
d(e_{S_{n}}(\alpha)) &= e_{S_{n}}(d(\alpha)) \\
&= \left\{
	\begin{array}{ll}
	e_{S_{n}}((y_{i_{1}}x_{i_{2}} - x_{i_{1}}y_{i_{2}})x_{j_{1,1},j_{1,2}} \cdots x_{j_{t_{1},1},j_{t_{1},2}} y_{k_{1,1},k_{1,2}} \cdots y_{k_{t_{2},1},k_{t_{2},2}}) & \mbox{if } (r,s_{1},s_{2}) = (1,0,0), \\
	0 & \mbox{otherwise},
	\end{array}\right. \\
&= \left\{
	\begin{array}{ll}
	-2e_{S_{n}}(x_{i_{1}}y_{i_{2}}x_{j_{1,1},j_{1,2}} \cdots x_{j_{t_{1},1},j_{t_{1},2}} y_{k_{1,1},k_{1,2}} \cdots y_{k_{t_{2},1},k_{t_{2},2}}) & \mbox{if } (r,s_{1},s_{2}) = (1,0,0), \\
	0 & \mbox{otherwise}.
	\end{array}\right.
\end{align*}

\

By Corollary \ref{rel7}, this shows that $d(e_{S_{n}}(\alpha)) \neq 0$ if and only if $(r, s_{1}, s_{2}) = (1, 0, 0)$. Thus, we see that $\ker(d)$, where $d$ is now the differential for $E_{2}(X,n)^{S_{n}}$, is generated by $e_{S_{n}}(\alpha)$ for $\alpha$ with $(r, s_{1}, s_{2}) \neq (1, 0, 0)$.

\

\hspace{3mm} The table

\begin{center}
\begin{tabular}{ |c|c|c|c| } 
\hline
$(r, s_{1}, s_{2})$ & $p - q = s_{1} + s_{2} - r$ \\
\hline
(0,0,0) & 0 \\ 
(0,0,1) & 1 \\ 
(0,1,0) & 1 \\ 
(0,1,1) & 2 \\
(1,0,0) & -1 \\
(1,0,1) & 0 \\
(1,1,0) & 0 \\
(1,1,1) & 1 \\
\hline
\end{tabular}
\end{center}

\

tells us that the only two choices for $(r, s_{1}, s_{2})$ that give $p - q = s_{1} + s_{2} - r \notin \{0, 1\}$ are $(1, 0, 0)$ and $(0, 1, 1)$.

\

\hspace{3mm} First, let us consider the case $(r, s_{1}, s_{2}) = (1, 0, 0)$, which corresponds to $p - q = -1$. Then $e_{S_{n}}(\alpha) \notin \ker(d)$, and since $\ker(d)$ is bigraded, this implies that $\ker(d^{p,q}) = 0$ for any $p, q$ such that $p - q = -1$. This implies that $E_{3}^{p,q}(X, n)^{S_{n}} = 0$ for any such $p, q$.

\

\hspace{3mm} Next, we consider the case where $\alpha$ has $(r, s_{1}, s_{2}) = (0, 1, 1)$. Then

\begin{align*}
e_{S_{n}}(\alpha) &= -2^{-1}(-2)e_{S_{n}}(x_{j}y_{k}x_{j_{1,1},j_{1,2}} \cdots x_{j_{t_{1},1},j_{t_{1},2}} y_{k_{1,1},k_{1,2}} \cdots y_{k_{t_{2},1},k_{t_{2},2}}) \\
&= -2^{-1} e_{S_{n}}((y_{j}x_{k} - x_{j}y_{k})x_{j_{1,1},j_{1,2}} \cdots x_{j_{t_{1},1},j_{t_{1},2}} y_{k_{1,1},k_{1,2}} \cdots y_{k_{t_{2},1},k_{t_{2},2}}) \\
&= -2^{-1} e_{S_{n}}(d(g_{j, k}x_{j_{1,1},j_{1,2}} \cdots x_{j_{t_{1},1},j_{t_{1},2}} y_{k_{1,1},k_{1,2}} \cdots y_{k_{t_{2},1},k_{t_{2},2}})) \\
&= d( e_{S_{n}}(-2^{-1}(g_{j, k}x_{j_{1,1},j_{1,2}} \cdots x_{j_{t_{1},1},j_{t_{1},2}} y_{k_{1,1},k_{1,2}} \cdots y_{k_{t_{2},1},k_{t_{2},2}}))).
\end{align*}

\

Therefore, we have $[e_{S_{n}}(\alpha)] = 0$ in $E_{3}(X, n)^{S_{n}}$. This shows that $E_{3}^{p,q}(X, n)^{S_{n}} = 0$ unless $p-q \in \{0, 1\}$, as desired.
\end{proof}

%%%%%%%%%%%%%%%%%%%%%%%%%%%%%%%%%%%%%%%
\newpage

\end{document}